\def\R{\mathbb R}
\def\N{\mathbb N}
\providecommand{\norm}[1]{\left\lVert#1\right\rVert}
\providecommand{\ip}[2]{\left\langle #1, #2 \right\rangle}
\providecommand{\domain}{\operatorname{dom}}
\providecommand{\range}{\operatorname{R}}
\providecommand{\F}{\operatorname{F}}
\numberwithin{equation}{section}
\theoremstyle{plain}
\newtheorem{theorem}{Theorem}[section]
\newtheorem{lemma}[theorem]{Lemma}
\newtheorem{corollary}[theorem]{Corollary}
\theoremstyle{definition}
\newtheorem{example}[theorem]{Example}
\theoremstyle{remark}
\newtheorem{remark}[theorem]{Remark}
\title[Approximation of fixed points of strongly nonexpansive sequences]
{Approximation of common fixed points of strongly nonexpansive sequences
in a Banach space}
\author{Koji~Aoyama}
\address[K.~Aoyama]
{Aoyama Mathematical Laboratory, Chiba University, 
Yayoi-cho, Inage-ku, Chiba, Chiba 263-8522, Japan}
\email{aoyama@le.chiba-u.ac.jp}
\author{Masashi~Toyoda}
\address[M.~Toyoda]
{Department of Information Science, Toho University, 
Miyama, Funabashi, Chiba 274-8510, Japan}
\email{mss-toyoda@is.sci.toho-u.ac.jp}
\keywords{Strongly nonexpansive sequence, 
common fixed point, strong convergence theorem}
\subjclass[2010]{47H09, 47H10, 41A65}
\begin{document}

\begin{abstract}
 The aim of this paper is to establish a strong convergence theorem for
 a strongly nonexpansive sequence in a Banach space.
 We also deal with some applications of the convergence theorem.
\end{abstract}

\maketitle

\section{Introduction}

The aim of the present paper is to study convergence of an iterative
sequence $\{x_n\}$ generated by the following algorithm:
$x_1 \in C$ and 
\begin{equation}\label{e:x_n:intro}
 x_{n+1}= \alpha_n u + (1-\alpha_n) S_n x_n
\end{equation}
for each positive integer $n$,
where $C$ is a closed convex subset of a Banach space $E$, 
$u$ is a point in $C$, 
$\{\alpha_n\}$ is a sequence in $(0,1]$, 
$\{S_n\}$ is a sequence of self-mappings of $C$ with a common fixed
point.
In particular, we prove that the sequence $\{x_n\}$ converges strongly
to a common fixed point of $\{S_n\}$ under suitable conditions. 
One of the features of our main results is that the parameter
$\{ \alpha_n \}$ in~\eqref{e:x_n:intro} is only assumed to satisfy the
following conditions:
$\lim_{n\to \infty} \alpha_n = 0$ and $\sum_{n=1}^\infty \alpha_n =
\infty$.

This paper is organized as follows:
In Section~\ref{s:pre}, we recall some definitions and known results and
we also provide some lemmas related to eventually increasing functions,
which play important roles in the proof of our main results.
In Section~\ref{s:main}, we present our main result
(Theorem~\ref{t:main}) and its corollaries (Corollaries~\ref{c:main}
and~\ref{c:single}).
Corollary~\ref{c:main} is a generalization of
\cite{MR2680036}*{Theorem~10}. 
In the final section,
we apply our main results to the common fixed point problem
for a sequence of nonexpansive mappings and the
zero point problem for an accretive operator in a Banach space.
Moreover, we also investigate convergence of a sequence $\{y_n\}$
defined by $y_1 \in C$ and 
\[
 y_{n+1}= \alpha_n f_n (y_n) + (1-\alpha_n) S_n y_n 
\]
for $n$, 
where $C$, $\{\alpha_n\}$, and $\{S_n\}$ are the same as above
and $\{f_n\}$ is a sequence of contraction-like self-mappings of $C$. 

\section{Preliminaries}\label{s:pre}

Throughout the present paper, 
$E$ denotes a real Banach space with norm $\norm{\,\cdot\,}$, 
$E^*$ the dual of $E$, 
$\ip{x}{f}$ the value of $f\in E^*$ at $x\in E$, 
$\N$ the set of positive integers, and $\R$ the set of real numbers.
The norm of $E^*$ is also denoted by $\norm{\,\cdot\,}$. 
Strong convergence of a sequence $\{x_n\}$ in $E$ to $x\in E$ is denoted
by $x_n \to x$.
The (normalized) \emph{duality mapping} of $E$ is denoted by $J$, that is, 
it is a set-valued mapping of $E$ into $E^*$ defined by 
$Jx = \bigl\{ f \in E^* : \ip{x}{f} = \norm{x}^2 = \norm{f}^2 \bigr\}$
for $x\in E$. 
It is known that 
\begin{equation}\label{ieq:|x+y|^2}
 \norm{x+y}^2 \leq \norm{x}^2 + 2\ip{y}{f}
\end{equation}
holds for all $x,y\in E$ and $f\in J(x+y)$;
see~\cite{MR1864294}*{Theorem~4.2.1}. 

Let $U_E$ denote the unit sphere of $E$, that is,
$U_E =\{ x\in E : \norm{x}=1 \}$.
The norm of $E$ is said to be
\emph{G\^{a}teaux differentiable} if the limit
\begin{equation}
\lim_{t \to 0} \frac{\norm{x+ty} -\norm{x}}t \label{eqn:norm-diff}
\end{equation}
exists for all $x,y \in U_E$; 
the norm of $E$ is said to be \emph{uniformly G\^{a}teaux differentiable}
if for each $y\in U_E$ the limit \eqref{eqn:norm-diff} is attained
uniformly for $x \in U_E$. 
We know that the duality mapping $J$ is single-valued if 
the norm of $E$ is G\^{a}teaux differentiable; 
$J$ is uniformly norm-to-weak* continuous on each bounded subset of $E$
if the norm of $E$ is uniformly G\^{a}teaux differentiable. 
A Banach space $E$ is said to be \emph{strictly convex}
if $x,y\in U_E$ and $x \ne y$ imply $\norm{x+y}<2$; 
$E$ is said to be \emph{uniformly convex} if for any $\epsilon >0$
there exists $\delta >0$ such that $x,y\in U_E$ and
$\norm{x-y}\geq \epsilon$ imply $\norm{x+y}/2 \leq 1-\delta$.
It is known that $E$ is reflexive if $E$ is uniformly convex; 
see~\cite{MR1864294} for more details. 
We know the following lemma; see~\cite{MR2581778}*{Lemma~2.1}. 

\begin{lemma}\label{l:uc-ft}
 Let $\{x_n\}$ and $\{y_n\}$ be bounded sequences in 
 a uniformly convex Banach space $E$ 
 and $\{\lambda_n\}$ a sequence in $[0,1]$ such that
 $\liminf_{n}\lambda_n>0$. 
 If $\lambda_n \norm{x_n}^2 + (1-\lambda_n)\norm{y_n}^2 
 -\norm{\lambda_n x_n + (1-\lambda_n)y_n}^2 \to 0$, 
 then $(1-\lambda_n)\norm{x_n-y_n}\to 0$. 
\end{lemma}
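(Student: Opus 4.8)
The plan is to argue by contradiction, reducing the statement to the standard quantitative form of uniform convexity (sometimes called Xu's inequality): for every $r>0$ there is a continuous, strictly increasing, convex function $g\colon[0,\infty)\to[0,\infty)$ with $g(0)=0$ such that
\[
 \norm{\lambda x+(1-\lambda)y}^2
 \le \lambda\norm{x}^2+(1-\lambda)\norm{y}^2-\lambda(1-\lambda)\,g\bigl(\norm{x-y}\bigr)
\]
whenever $\norm{x}\le r$, $\norm{y}\le r$, and $\lambda\in[0,1]$. This is the only nontrivial input; the rest is bookkeeping.

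First I would fix $M\ge 1$ with $\norm{x_n}\le M$ and $\norm{y_n}\le M$ for all $n$ (possible by boundedness), put $a:=\liminf_n\lambda_n>0$, and write $\tau_n$ for the quantity $\lambda_n\norm{x_n}^2+(1-\lambda_n)\norm{y_n}^2-\norm{\lambda_n x_n+(1-\lambda_n)y_n}^2$, so that $\tau_n\to 0$ by hypothesis. Suppose $(1-\lambda_n)\norm{x_n-y_n}\not\to 0$; then there are $\epsilon>0$ and a subsequence $\{n_k\}$ with $(1-\lambda_{n_k})\norm{x_{n_k}-y_{n_k}}\ge\epsilon$ for all $k$. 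From $\norm{x_{n_k}-y_{n_k}}\le 2M$ I get $1-\lambda_{n_k}\ge\epsilon/(2M)$, and since $1-\lambda_{n_k}\le 1$ I also get $\norm{x_{n_k}-y_{n_k}}\ge\epsilon$. Finally, by the definition of $a$ there is $N$ with $\lambda_n\ge a/2$ for $n\ge N$, so $\lambda_{n_k}(1-\lambda_{n_k})\ge a\epsilon/(4M)$ once $n_k\ge N$.

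Then I would apply the displayed inequality with $r=M$ to $x=x_{n_k}$, $y=y_{n_k}$, $\lambda=\lambda_{n_k}$ and rearrange to get $\tau_{n_k}\ge\lambda_{n_k}(1-\lambda_{n_k})\,g\bigl(\norm{x_{n_k}-y_{n_k}}\bigr)$. Since $g$ is strictly increasing with $g(0)=0$ and $\norm{x_{n_k}-y_{n_k}}\ge\epsilon$, this gives $\tau_{n_k}\ge\frac{a\epsilon}{4M}\,g(\epsilon)>0$ for all large $k$, contradicting $\tau_{n_k}\to 0$. Hence $(1-\lambda_n)\norm{x_n-y_n}\to 0$.

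The only delicate point is the appeal to Xu's inequality. If one insisted on a self-contained argument one would instead pass to a further subsequence along which $\lambda_n$, $\norm{x_n}$, $\norm{y_n}$, and $\norm{\lambda_n x_n+(1-\lambda_n)y_n}$ all converge, dispose of the degenerate cases $\norm{x_n}\to 0$ or $\norm{y_n}\to 0$ by a direct computation, and in the remaining case normalize $x_n$ and $y_n$ and invoke the definition of uniform convexity; but this just reconstructs the function $g$ by hand, so citing it is cleaner. Apart from that I expect no obstacles: the hypothesis $\liminf_n\lambda_n>0$ is used only to keep $\lambda_{n_k}$ bounded away from $0$ in the final estimate, and it is precisely the factor $(1-\lambda_n)$ in the conclusion (rather than $\norm{x_n-y_n}$ itself) that makes the bound $1-\lambda_{n_k}\ge\epsilon/(2M)$ available.
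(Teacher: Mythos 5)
Your proof is correct. Note that the paper does not prove this lemma at all---it is quoted from \cite{MR2581778}*{Lemma~2.1}---so there is no in-paper argument to compare against; your route via the quantitative uniform-convexity inequality $\norm{\lambda x+(1-\lambda)y}^2\le\lambda\norm{x}^2+(1-\lambda)\norm{y}^2-\lambda(1-\lambda)g(\norm{x-y})$ on bounded sets is the standard one, and the bookkeeping (extracting $1-\lambda_{n_k}\ge\epsilon/(2M)$ and $\norm{x_{n_k}-y_{n_k}}\ge\epsilon$ from the single hypothesis $(1-\lambda_{n_k})\norm{x_{n_k}-y_{n_k}}\ge\epsilon$, and $\lambda_{n_k}\ge a/2$ from the liminf condition) is exactly what is needed to make the contradiction work. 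The only external input is Xu's inequality, which you correctly flag and which is a legitimate citation.
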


Let $C$ be a nonempty subset of $E$ and $T\colon C\to E$ a mapping. 
The set of fixed points of $T$ is denoted by $\F(T)$.
A mapping $T$ is said to be \emph{nonexpansive} if $\norm{Tx-Ty} \leq
\norm{x-y}$ for all $x,y \in C$;
$T$ is said to be \emph{strongly nonexpansive}~\cites{MR0470761}
if $T$ is nonexpansive and
$x_n - y_n - (T x_n - T y_n )\to 0$ whenever 
$\{x_n \}$ and $\{y_n\}$ are sequences in $C$, 
$\{x_n - y_n \}$ is bounded, 
and $\norm{x_n - y_n} - \norm{T x_n - Ty_n} \to 0$. 

We know the following lemma; see~\cite{MR3666446}*{Lemma~2.9}: 

\begin{lemma}\label{l:lemma2.9}
 Let $E$ be a Banach space whose norm is uniformly G\^{a}teaux
 differentiable,  
 $C$ a nonempty closed convex subset of $E$, 
 $T$ a nonexpansive self-mapping of $C$, 
 $\{x_n\}$ a bounded sequence in $C$, 
 $u$ a point in $C$, and
 $z_t$ a unique point in $C$ such that $z_t = tu + (1-t) Tz_t$ 
 for $t \in (0,1)$. 
 Suppose that $x_n - Tx_n \to 0$ as $n \to \infty$
 and $z_t \to w$ as $t \downarrow 0$. Then
 $\limsup_{n\to \infty} \ip{u-w}{J(x_n - w)} \leq 0$. 
\end{lemma}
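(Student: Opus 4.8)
The plan is to analyze the approximating point $z_t$ for a fixed $t\in(0,1)$ and only at the end pass to the limit $t\downarrow 0$. (Existence and uniqueness of $z_t$ are immediate, since $y\mapsto tu+(1-t)Ty$ is a contraction of $C$ into itself with constant $1-t<1$; recall also that $J$ is single-valued here because the norm of $E$ is G\^ateaux differentiable.) The starting point is the algebraic identity
\[
 x_n - z_t = (Tx_n - Tz_t) + (x_n - Tx_n) + t(Tz_t - u),
\]
which follows at once from $z_t = tu + (1-t)Tz_t$.

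Next I would apply~\eqref{ieq:|x+y|^2} to this decomposition with $f = J(x_n - z_t)$ and use the nonexpansiveness of $T$ to cancel the term $\norm{x_n - z_t}^2$ occurring on both sides; this yields
\[
 t\,\ip{u - Tz_t}{J(x_n - z_t)} \le \ip{x_n - Tx_n}{J(x_n - z_t)} .
\]
For fixed $t$ the sequence $\{\norm{J(x_n - z_t)}\} = \{\norm{x_n - z_t}\}$ is bounded because $\{x_n\}$ is bounded, so the hypothesis $x_n - Tx_n\to 0$ forces the right-hand side to $0$; dividing by $t>0$ gives $\limsup_{n\to\infty}\ip{u - Tz_t}{J(x_n - z_t)} \le 0$ for every $t\in(0,1)$. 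Since $u - z_t = (1-t)(u - Tz_t)$, this is the same as $\limsup_{n\to\infty}\ip{u - z_t}{J(x_n - z_t)} \le 0$.

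It remains to let $t\downarrow 0$. I would use the splitting
\[
 \ip{u - w}{J(x_n - w)}
 = \ip{u - z_t}{J(x_n - z_t)}
 + \ip{u - w}{J(x_n - w) - J(x_n - z_t)}
 + \ip{z_t - w}{J(x_n - z_t)} .
\]
Because $z_t \to w$, the sequences $\{x_n - w\}$ and $\{x_n - z_t\}$ are bounded uniformly for $t$ near $0$, while $\norm{(x_n - w) - (x_n - z_t)} = \norm{z_t - w}\to 0$ uniformly in $n$ as $t\downarrow 0$. Hence the last term is $O(\norm{z_t-w})$ uniformly in $n$, and the middle term tends to $0$ uniformly in $n$ by the uniform norm-to-weak* continuity of $J$ on bounded sets, which holds since the norm of $E$ is uniformly G\^ateaux differentiable. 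Taking $\limsup_{n\to\infty}$ in the displayed identity and then letting $t\downarrow 0$ gives the conclusion. The step demanding the most care is this last one: one must check that the two error terms are genuinely uniform in $n$, so that the interchange of $\limsup_{n\to\infty}$ with $\lim_{t\downarrow 0}$ is legitimate.
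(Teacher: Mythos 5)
Your proof is correct. Note that the paper does not actually prove this lemma---it is imported verbatim from \cite{MR3666446}*{Lemma~2.9}---so there is no in-paper argument to compare against; what you have written is the standard (Reich/Takahashi--Ueda style) proof of such statements, and every step checks out. In particular, the decomposition $x_n - z_t = (Tx_n - Tz_t) + (x_n - Tx_n) + t(Tz_t - u)$ together with \eqref{ieq:|x+y|^2} and nonexpansiveness does give $t\,\ip{u - Tz_t}{J(x_n - z_t)} \le \ip{x_n - Tx_n}{J(x_n - z_t)}$, hence $\limsup_{n\to\infty}\ip{u - z_t}{J(x_n - z_t)} \le 0$ for each fixed $t$, and the two error terms in your final splitting are indeed $O(\norm{z_t - w})$ and $o(1)$ as $t \downarrow 0$ \emph{uniformly in $n$} (the latter by the uniform norm-to-weak* continuity of $J$ on bounded sets, applied with the fixed functional direction $u-w$), so the interchange of $\limsup_{n\to\infty}$ with $\lim_{t\downarrow 0}$ is legitimate.
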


A Banach space $E$ is said to have the \emph{fixed point property for
nonexpansive mappings} if every nonexpansive self-mapping of a
bounded closed convex subset $D$ of $E$ has a fixed point in $D$.

Let $C$ be a nonempty subset of $E$, 
$K$ a nonempty subset of $C$, and $Q$ a mapping of $C$ onto $K$.
Then $Q$ is said to be a \emph{retraction} if $Qx = x$ for all
$x \in K$;
$Q$ is said to be \emph{sunny} if $Q\bigl( Qx + \lambda(x-Qx) \bigr) =
Qx$ holds whenever $x\in C$, $\lambda \geq 0$,
and $Qx + \lambda(x-Qx) \in C$;
$K$ is said to be a \emph{sunny nonexpansive retract} of $C$
if there exists a sunny nonexpansive retraction~\cite{MR0328689} of $C$
onto $K$; see also~\cite{MR744194}. 

Using the results in \cite{MR576291}, \cite{MR766150},
and~\cite{MR1864294}, we obtain the following: 

\begin{lemma}\label{l:takahashi-ueda}
 Let $E$ be a reflexive Banach space, 
 $C$ a nonempty closed convex subset of $E$, and 
 $T$ a nonexpansive self-mapping of $C$ with $\F(T) \neq \emptyset$.
 Suppose that the norm of $E$ is uniformly G\^{a}teaux differentiable
 and $E$ 
 has the fixed point property for nonexpansive mappings. 
 Then $\F(T)$ is a sunny nonexpansive retract of $C$.
 Moreover, let $u$ and $z_t$ be the same as in Lemma~\ref{l:lemma2.9}. 
 Then $z_t \to Qu$ as $t \downarrow 0$, 
 where $Q$ is the unique sunny nonexpansive retraction of $C$ onto $\F(T)$. 
\end{lemma}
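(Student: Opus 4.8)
The plan is to assemble the Reich and Takahashi--Ueda Banach-limit argument, using the fixed point property in the crucial step. Throughout write $z_t = z_t(u)$ to record the dependence on $u\in C$. First I would collect elementary facts about the net $\{z_t\}_{t\in(0,1)}$. Fixing $p\in\F(T)$, the identity $z_t-p = t(u-p)+(1-t)(Tz_t-Tp)$ and nonexpansiveness give $\norm{z_t-p}\le t\norm{u-p}+(1-t)\norm{z_t-p}$, hence $\norm{z_t-p}\le\norm{u-p}$ and $\{z_t\}$ is bounded; also $\norm{z_t-Tz_t}=t\norm{u-Tz_t}\to0$ as $t\downarrow0$. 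Writing $u-z_t=\tfrac{1-t}{t}(z_t-Tz_t)$ and using $p=Tp$,
\[
 \ip{z_t-Tz_t}{J(z_t-p)}=\norm{z_t-p}^2-\ip{Tz_t-Tp}{J(z_t-p)}\ge\norm{z_t-p}^2-\norm{Tz_t-Tp}\,\norm{z_t-p}\ge0,
\]
so we obtain the basic variational inequality $\ip{u-z_t}{J(z_t-p)}\ge0$ for all $p\in\F(T)$ and $t\in(0,1)$, equivalently $\norm{z_t-p}^2\le\ip{u-p}{J(z_t-p)}$.

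To locate a limit I would fix a sequence $t_n\downarrow0$, set $y_n=z_{t_n}$, fix a Banach limit $\mu$, and define $\phi(x)=\mu_n\norm{y_n-x}^2$ for $x\in E$. Then $\phi$ is convex, continuous, and coercive, so reflexivity of $E$ forces $\phi$ to attain its infimum over $C$ on a nonempty bounded closed convex set $K$. Since $\norm{y_n-Tx}\le\norm{y_n-Ty_n}+\norm{y_n-x}$ and $\norm{y_n-Ty_n}\to0$, one gets $\phi(Tx)\le\phi(x)$ for every $x\in C$, so $T(K)\subseteq K$; by the fixed point property $T$ has a fixed point $p^*\in K$. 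Next, applying \eqref{ieq:|x+y|^2} in the form $\norm{a-sb}^2\le\norm{a}^2-2s\ip{b}{J(a-sb)}$ with $a=y_n-p^*$, $b=w-p^*$ (for $w\in C$, $s\in(0,1]$), taking $\mu_n$, and comparing with $\phi(p^*)\le\phi\bigl((1-s)p^*+sw\bigr)$, I would obtain $\mu_n\ip{w-p^*}{J(y_n-p^*-s(w-p^*))}\le0$; letting $s\downarrow0$ and using the uniform norm-to-weak$^*$ continuity of $J$ on bounded sets (which is uniform in $n$, since the norm of $E$ is uniformly G\^{a}teaux differentiable) yields $\mu_n\ip{w-p^*}{J(y_n-p^*)}\le0$ for every $w\in C$, in particular for $w=u$. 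Combined with $\norm{y_n-p^*}^2\le\ip{u-p^*}{J(y_n-p^*)}$ this gives $\phi(p^*)=\mu_n\norm{y_n-p^*}^2=0$, so some subsequence $y_{n_k}\to p^*$ strongly, with $p^*\in\F(T)$.

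Finally I would prove uniqueness of the limit and identify the retraction. Passing to the limit in $\ip{u-y_{n_k}}{J(y_{n_k}-q)}\ge0$, using $y_{n_k}\to p^*$ and norm-to-weak$^*$ continuity of $J$, gives $\ip{u-p^*}{J(p^*-q)}\ge0$ for every $q\in\F(T)$; a monotonicity argument (add this inequality to the same one with the roles of the two candidate limits exchanged) shows at most one point of $\F(T)$ can satisfy it, so all strong subsequential limits of $\{z_t\}$ as $t\downarrow0$ coincide. Since every sequence $t_n\downarrow0$ admits such a subsequence, $z_t$ converges strongly as $t\downarrow0$; set $Qu:=\lim_{t\downarrow0}z_t(u)\in\F(T)$. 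The same contraction estimate gives $\norm{z_t(u)-z_t(v)}\le\norm{u-v}$, hence $\norm{Qu-Qv}\le\norm{u-v}$, and $z_t(u)=u$ whenever $u\in\F(T)$; thus $Q$ is a nonexpansive retraction of $C$ onto $\F(T)$, and the inequality $\ip{u-Qu}{J(Qu-q)}\ge0$ ($q\in\F(T)$) is exactly the standard characterization of sunniness in a smooth Banach space, so $Q$ is the unique sunny nonexpansive retraction of $C$ onto $\F(T)$ and $z_t\to Qu$.

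The hard part is the step yielding $\mu_n\ip{w-p^*}{J(y_n-p^*)}\le0$: since $E$ is not assumed to be uniformly smooth or to have strictly convex dual, no demiclosedness principle is available, and the fixed point property is precisely what lets a fixed point of $T$ be found inside the set $K$ of minimizers of $\phi$; moreover the passage $s\downarrow0$ inside the Banach limit is legitimate only because the norm-to-weak$^*$ continuity of $J$ is uniform on bounded sets, i.e.\ uniform in $n$. The remaining steps are routine manipulations of \eqref{ieq:|x+y|^2} together with boundedness of $\{z_t\}$.
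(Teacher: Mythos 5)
Your argument is correct and is essentially the standard Reich/Takahashi--Ueda Banach-limit proof; the paper itself offers no proof of this lemma, deriving it directly from \cite{MR576291}, \cite{MR766150}, and \cite{MR1864294}, whose argument is exactly the one you reconstruct (boundedness and the variational inequality for $z_t$, minimization of $\phi(x)=\mu_n\norm{z_{t_n}-x}^2$ over $C$ with the fixed point property supplying $p^*\in\F(T)\cap K$, the $s\downarrow 0$ passage via uniform G\^{a}teaux differentiability, and the monotonicity-of-$J$ uniqueness step identifying the limit as the sunny nonexpansive retraction). No gaps.
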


Let $C$ be a nonempty subset of $E$, $T\colon C\to E$ a mapping, 
and $\{S_n\}$ a sequence of mappings of $C$ into $E$. 
The set of common fixed points of $\{S_n\}$ is denoted by $\F(\{S_n\})$, 
that is, $\F(\{S_n\}) = \bigcap_{n=1}^\infty \F (S_n)$. 
Then $\{S_n\}$ is said to be a \emph{strongly nonexpansive sequence}
\cites{MR2377867, MR2581778, MR2799767} if each $S_n$ is nonexpansive and
$x_n - y_n - (S_n x_n - S_n y_n )\to 0$ whenever 
$\{x_n \}$ and $\{y_n\}$ are sequences in $C$, 
$\{x_n - y_n \}$ is bounded, 
and $\norm{x_n - y_n} - \norm{S_n x_n - S_n y_n} \to 0$; 
$\{S_n\}$ is said to satisfy the \emph{NST condition (I) with $T$}
\cites{MR2314663,MR2468414} if $\F(\{S_n\})$ is nonempty,
$\F(T) \subset \F(\{S_n\})$, and $x_n - Tx_n \to 0$ whenever
$\{x_n\}$ is a bounded sequence in $C$ and $x_n - S_n x_n \to 0$. 

\begin{remark}
 Let $C$, $T$, and $\{S_n\}$ be the same as above. 
 If $\{S_n\}$ satisfies the NST condition (I) with $T$,
 then $\F(T) \supset \F(\{S_n\})$.
 Indeed, let $z \in \F(\{S_n\})$ and set $x_n = z$ for $n \in \N$.
 Then it is clear that $\{x_n\}$ is a bounded sequence in $C$ and
 $x_n - S_n x_n = 0$ for every $n \in \N$, 
 and hence $x_n - S_n x_n \to 0$.
 Thus $z - Tz = x_n - Tx_n \to 0$, which means that $z \in \F(T)$. 
\end{remark}

\begin{example}\label{ex:const-SNS}
 Let $C$ be a nonempty subset of $E$ and $T\colon C\to E$ a mapping. 
 Set $S_n = T$ for $n \in \N$.
 Then it is clear that if $T$ is strongly nonexpansive, then $\{ S_n \}$
 is a strongly nonexpansive sequence.
 Moreover, it is also clear that if $\F(T) \ne \emptyset$,
 then $\{S_n\}$ satisfies the NST condition (I) with $T$.
\end{example}

\begin{example}[\cite{MR2581778}*{Corollary~3.8}]\label{ex:I+T_n:SNS}
 Let $C$ be a nonempty subset of a uniformly convex Banach space $E$, 
 $\{V_n\}$ a sequence of nonexpansive mappings of $C$ into
 $E$, and $\{\gamma_n\}$ a sequence in $[0,1]$ such that 
 $\liminf_{n\to\infty}\gamma_n >0$. 
 Then a sequence $\{S_n\}$ of mappings defined by 
 $S_n = \gamma_n I + (1-\gamma_n)V_n$ for $n\in\N$
 is a strongly nonexpansive sequence, 
 where $I$ is the identity mapping on $C$. 
\end{example}

We know the following lemma; see~\cites{MR1911872,MR2338104}. 

\begin{lemma}\label{lm:seq}
 Let $\{ \xi_n \}$ be a sequence of nonnegative real numbers, 
 $\{ \gamma_n \}$ a sequence of real numbers, 
 and $\{ \alpha_n \}$ a sequence in $[0,1]$. 
 Suppose that 
 $\xi_{n+1} \leq (1- \alpha_n) \xi_n + \alpha_n \gamma_n$
 for every $n\in\N$, $\limsup_{n\to\infty} \gamma_n \leq 0 $, 
 and $\sum_{n=1}^\infty \alpha_n = \infty$. Then $\xi_n \to 0$. 
\end{lemma}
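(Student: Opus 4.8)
The statement is the classical convergence lemma for recursions of the form $\xi_{n+1}\le(1-\alpha_n)\xi_n+\alpha_n\gamma_n$, so the plan is the standard ``fix $\varepsilon$, then iterate'' argument; there is no genuine obstacle here, only a couple of sign issues to keep straight.

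First I would fix $\varepsilon>0$ and use $\limsup_{n\to\infty}\gamma_n\le0$ to choose $N\in\N$ with $\gamma_n\le\varepsilon$ for all $n\ge N$. For such $n$ the hypothesis rewrites as $\xi_{n+1}-\varepsilon\le(1-\alpha_n)(\xi_n-\varepsilon)$. To avoid worrying about the sign of $\xi_n-\varepsilon$, I would pass to $b_n:=\max\{\xi_n-\varepsilon,0\}$ for $n\ge N$; since $\alpha_n\in[0,1]$ gives $1-\alpha_n\ge0$, one gets $\xi_{n+1}-\varepsilon\le(1-\alpha_n)(\xi_n-\varepsilon)\le(1-\alpha_n)b_n$ and also $0\le(1-\alpha_n)b_n$, hence $b_{n+1}\le(1-\alpha_n)b_n$. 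Iterating this from $N$ to $n$ yields
\[
 b_{n+1}\le\Bigl(\prod_{k=N}^{n}(1-\alpha_k)\Bigr)b_N
\]
for every $n\ge N$.

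The key estimate is then that $\prod_{k=N}^{n}(1-\alpha_k)\to0$ as $n\to\infty$: using the elementary bound $0\le 1-t\le e^{-t}$ for $t\in[0,1]$,
\[
 \prod_{k=N}^{n}(1-\alpha_k)\le\exp\Bigl(-\sum_{k=N}^{n}\alpha_k\Bigr),
\]
and the right-hand side tends to $0$ because $\sum_{n=1}^{\infty}\alpha_n=\infty$. Therefore $b_n\to0$, which means $\limsup_{n\to\infty}\xi_n\le\varepsilon$. Since $\varepsilon>0$ was arbitrary, $\limsup_{n\to\infty}\xi_n\le0$, and combined with $\xi_n\ge0$ this gives $\xi_n\to0$. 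The only points requiring care are the reduction to the positive part $b_n$ — which is exactly where $\alpha_n\in[0,1]$ is used, so that the factors $1-\alpha_k$ are nonnegative and the iteration preserves the inequality — and the passage from the finite products to $0$ via $1-t\le e^{-t}$; neither is a real difficulty.
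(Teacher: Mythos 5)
Your proof is correct and complete: the reduction to $b_n=\max\{\xi_n-\varepsilon,0\}$, the telescoping product, and the bound $\prod_{k=N}^{n}(1-\alpha_k)\le\exp\bigl(-\sum_{k=N}^{n}\alpha_k\bigr)\to 0$ are exactly the standard argument. The paper does not prove this lemma at all --- it is quoted as known with citations to Xu and to Aoyama--Kimura--Takahashi--Toyoda --- and your argument is essentially the one given in those references, so there is nothing to reconcile.
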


We use the following lemma in Section~\ref{s:app}. 

\begin{lemma}\label{l:absolutely}
 Let $E$ be a Banach space,
 $\{ y_j^n \}$ a double sequence in $E$ indexed by $(j,n) \in \N \times
 \N$, and $\{\lambda_j \}$ a sequence in $[0,1]$. 
 Suppose that 
 $\sup \bigl\{ \bigl\lVert y^n_j \bigr\rVert :
 (j,n)\in \N \times \N \bigr \}$ is finite, 
 $\sum_{j=1}^\infty \lambda_j = 1$, 
 and $\lim_{n\to \infty} y^n_j = 0$ for all $j \in \N$.
 Then $\sum_{j=1}^\infty \lambda_j y_j^n$ converges (absolutely)
 for all $n \in \N$ and
 $\lim_{n\to 0}\sum_{j=1}^\infty \lambda_j y_j^n = 0$. 
\end{lemma}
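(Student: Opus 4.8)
The plan is to first handle absolute convergence of the series for fixed $n$, and then to prove the limit statement by splitting the series at a large index. Let $M = \sup\bigl\{\bigl\lVert y^n_j\bigr\rVert : (j,n)\in\N\times\N\bigr\}$, which is finite by hypothesis. For each fixed $n$, the estimate $\bigl\lVert\lambda_j y^n_j\bigr\rVert \le M\lambda_j$ together with $\sum_{j=1}^\infty\lambda_j = 1 < \infty$ and the completeness of $E$ gives that $\sum_{j=1}^\infty\lambda_j y^n_j$ converges absolutely; this is the routine part.

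For the limit, fix $\epsilon > 0$. Since $\sum_{j=1}^\infty\lambda_j = 1$, the tail $\sum_{j=N+1}^\infty\lambda_j$ can be made smaller than $\epsilon$ by choosing $N$ large; fix such an $N$. Then for every $n$,
\[
 \Bigl\lVert \sum_{j=1}^\infty \lambda_j y^n_j \Bigr\rVert
 \le \sum_{j=1}^{N} \lambda_j \bigl\lVert y^n_j \bigr\rVert
 + M \sum_{j=N+1}^\infty \lambda_j
 \le \sum_{j=1}^{N} \bigl\lVert y^n_j \bigr\rVert + M\epsilon .
\]
Now $N$ is a fixed \emph{finite} number, so the assumption $\lim_{n\to\infty} y^n_j = 0$ for each $j\in\{1,\dots,N\}$ yields an index $n_0$ beyond which each of the finitely many terms $\bigl\lVert y^n_j\bigr\rVert$ is at most $\epsilon/N$, whence the first sum is at most $\epsilon$. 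Thus $\bigl\lVert\sum_{j=1}^\infty\lambda_j y^n_j\bigr\rVert \le (1+M)\epsilon$ for all $n\ge n_0$, which proves $\lim_{n\to\infty}\sum_{j=1}^\infty\lambda_j y^n_j = 0$.

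The only genuinely delicate point is making sure the order of operations is legitimate: one must cut the infinite series at a \emph{fixed} $N$ chosen \emph{before} passing to the limit in $n$, so that the convergence $y^n_j\to 0$ is applied only finitely many times and uniformity is not needed. This is the standard $\epsilon/2$-type argument (here split as tail versus head of the series), and there is no real obstacle beyond bookkeeping; the boundedness hypothesis on the double sequence is exactly what controls the tail uniformly in $n$. I would also remark in passing that ``$\lim_{n\to 0}$'' in the statement should read ``$\lim_{n\to\infty}$''.
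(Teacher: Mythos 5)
Your proposal is correct and follows essentially the same argument as the paper: absolute convergence from the uniform bound $M$ together with $\sum_j\lambda_j=1$, and then the head--tail splitting of the series, controlling the tail uniformly in $n$ via $M$ and the finitely many head terms via the pointwise convergence $y^n_j\to 0$. The only differences are cosmetic (your constants give $(1+M)\epsilon$ where the paper arranges for $\epsilon$ directly), and your remark that ``$\lim_{n\to 0}$'' should read ``$\lim_{n\to\infty}$'' is also correct.
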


\begin{proof}
 Let $n \in \N$ be fixed.
 Set $M= \sup \bigl\{ \bigl\lVert y^n_j \bigr\rVert :
 (j,n) \in \N \times \N \bigr \}$. 
 Without loss of generality, we may assume that $M>0$. 
 Then 
 \[
 \sum_{j=1}^m \norm{\lambda_j y_j^n}
 = \sum_{j=1}^m \lambda_j \norm{y_j^n}
 \leq M \sum_{j=1}^m \lambda_j \leq M
 \]
 holds for all $m \in \N$.
 Thus $\sum_{j=1}^\infty \lambda_j y_j^n$ converges absolutely
 for all $n \in \N$. 

 We next show that $\sum_{j=1}^\infty \lambda_j y_j^n \to 0$ as $n \to
 \infty$. 
 Let $\epsilon> 0$ be fixed. 
 Then there exists $m \in \N$ such that
 $\sum_{j=m+1}^\infty \lambda_j < \epsilon /(2M)$
 by virtue of $\sum_{j=1}^\infty \lambda_j = 1$. 
 On the other hand,
 since $\lim_{n\to \infty} y^n_j = 0$ for all $j \in \N$,
 there exists $l \in \N$ such that
 $\bigl\lVert y_j^n \bigr\rVert < \epsilon/2$ for all $n \in \N$ with $n\geq l$
 and $j \in \{1, \dotsc, m\}$.
 Consequently, we see that 
 \[
  \sum_{j=1}^\infty \norm{\lambda_j y_j^n}  
  = \sum_{j=1}^m \lambda_j \norm{y_j^n}
  + \sum_{j=m+1}^\infty \lambda_j \norm{y_j^n}
  \leq \sum_{j=1}^m \lambda_j \dfrac{\epsilon}2
  + \sum_{j=m+1}^\infty \lambda_j M  < \epsilon
 \]
 for all $n \in \N$ with $n \geq l$.
 Therefore, $\lim_{n\to 0}\sum_{j=1}^\infty \lambda_j y_j^n = 0$. 
\end{proof}

In the rest of this section, we provide some lemmas about an eventually
increasing function and a strongly nonexpansive sequence. 

A function $\tau \colon \N \to \N$ is said to be \emph{eventually
increasing} \cites{MR2960628,MR3213161,NMJ2016,pNACA2011} 
or \emph{unboundedly increasing}~\cite{MR3666446}
if $\lim_{n\to\infty}\tau(n) = \infty$ and 
$\tau(n) \leq \tau(n+1)$ for all $n\in \N$. 
It is clear that if $\tau \colon \N \to \N$ is an eventually increasing
function and $\{\xi_n\}$ is a sequence of real numbers such that
$\xi_n \to 0$, then $\xi_{\tau(n)}\to 0$. 

Using \cite{MR2466027}*{Lemma 3.1}, we obtain the following: 

\begin{lemma}[\cite{MR2960628}*{Lemma 3.4}]\label{l:neo_mainge}
 Let $\{\xi_n\}$ be a sequence of nonnegative real numbers which is not
 convergent. 
 Then there exist $N\in \N$ and an eventually increasing function 
 $\tau \colon \N \to \N$ such that
 $\xi_{\tau(n)} \le \xi_{\tau(n)+1}$ for all $n \in \N$ and 
 $\xi_n \le \xi_{\tau(n)+1}$ for all $n \geq N$. 
\end{lemma}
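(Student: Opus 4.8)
The plan is to deduce this from \cite{MR2466027}*{Lemma~3.1}, which manufactures exactly such a function $\tau$ out of any sequence that ``does not decrease at infinity,'' i.e.\ any sequence admitting a subsequence $\{\xi_{n_j}\}$ with $\xi_{n_j} < \xi_{n_j+1}$ for every $j$. So the first thing to establish is that our $\{\xi_n\}$ has this property, and this is precisely where nonnegativity is used: if the set $A = \{ n \in \N : \xi_n < \xi_{n+1} \}$ were finite, then $\{\xi_n\}$ would be nonincreasing from some index onward; being bounded below by $0$, it would then converge, contradicting the hypothesis. Hence $A$ is infinite, and enumerating it as $n_1 < n_2 < \cdots$ gives the required subsequence.

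Granting this, \cite{MR2466027}*{Lemma~3.1} supplies an index $n_0 \in \N$ (large enough that $\{k \in A : k \le n\} \ne \emptyset$ whenever $n \ge n_0$) such that the function $\sigma(n) = \max\{ k \in A : k \le n \}$, defined for $n \ge n_0$, is nondecreasing, satisfies $\sigma(n) \to \infty$, and obeys $\xi_{\sigma(n)} \le \xi_{\sigma(n)+1}$ and $\xi_n \le \xi_{\sigma(n)+1}$ for all $n \ge n_0$. To get a function defined on all of $\N$ I would put $N = n_0$ and set $\tau(n) = \sigma(\max(n, N))$; on $\{1, \dots, N-1\}$ this is constant and equal to $\sigma(N)$, which is itself an index of $A$, so there $\xi_{\tau(n)} < \xi_{\tau(n)+1}$, while for $n \ge N$ the inequality $\xi_{\tau(n)} \le \xi_{\tau(n)+1}$ is Maingé's estimate; in either case it holds for all $n \in \N$. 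Monotonicity $\tau(n) \le \tau(n+1)$ and $\tau(n) \to \infty$ are inherited from $\sigma$, so $\tau$ is eventually increasing, and the remaining estimate $\xi_n \le \xi_{\tau(n)+1}$ for $n \ge N$ is again Maingé's.

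I do not anticipate any real difficulty here: the only substantive step is the short observation that a nonnegative nonconvergent sequence cannot be eventually nonincreasing, after which everything reduces to a direct appeal to \cite{MR2466027}*{Lemma~3.1}. The one point demanding a little care is the bookkeeping that turns the partially defined $\sigma$ into a function on all of $\N$ while keeping $\xi_{\tau(n)} \le \xi_{\tau(n)+1}$ valid for \emph{every} $n$ rather than only $n \ge N$; truncating the argument of $\sigma$ at $N$ handles this cleanly. For a self-contained treatment one can also verify $\xi_n \le \xi_{\tau(n)+1}$ for $n \ge N$ by hand: it is immediate when $\tau(n) = n$, and when $\tau(n) < n$ none of $\tau(n)+1, \dots, n-1$ lies in $A$, forcing the telescoping chain $\xi_n \le \xi_{n-1} \le \cdots \le \xi_{\tau(n)+1}$.
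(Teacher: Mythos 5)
Your proof is correct and follows exactly the route the paper indicates (the paper gives no proof of its own, stating only that the lemma is obtained from \cite{MR2466027}*{Lemma~3.1}): you verify that nonnegativity plus non-convergence forces the set $\{n : \xi_n < \xi_{n+1}\}$ to be infinite, invoke Maing\'e's lemma, and handle the bookkeeping needed to extend $\sigma$ to an eventually increasing function on all of $\N$ with $\xi_{\tau(n)} \le \xi_{\tau(n)+1}$ holding for every $n$. The telescoping verification of $\xi_n \le \xi_{\tau(n)+1}$ is also sound, so nothing is missing.
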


In Lemma~\ref{l:neo_mainge}, we cannot replace the word ``eventually''
by ``strictly''; see \cite{MR2960628}*{Example 3.3}. 

Applying Lemma~\ref{l:neo_mainge}, we obtain the following lemma,
which is used in Section~\ref{s:main}. 

\begin{lemma}\label{l:seq+mainge}
 Let $\{ \xi_n \}$ be a sequence of nonnegative real numbers, 
 $\{ \alpha_n \}$ a sequence in $(0,1]$, and 
 $\{ \gamma_n \}$ a sequence of real numbers. 
 Suppose that 
 $\xi_{\tau(n)+1} \leq (1- \alpha_{\tau(n)}) \xi_{\tau(n)}
 + \alpha_{\tau(n)} \gamma_{\tau(n)}$ for all $n \in \N$
 and $\limsup_{n\to\infty} \gamma_{\tau(n)} \leq 0$
 whenever $\tau \colon \N \to \N$ is an eventually increasing function
 such that $\xi_{\tau(n)} \leq \xi_{\tau(n)+1}$ for all $n \in \N$. 
 Then $\{ \xi_n \}$ is convergent. 
\end{lemma}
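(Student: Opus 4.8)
The plan is to argue by contradiction, using Lemma~\ref{l:neo_mainge} as the engine. Suppose $\{\xi_n\}$ is not convergent. Then Lemma~\ref{l:neo_mainge} provides an $N\in\N$ and an eventually increasing function $\tau\colon\N\to\N$ with $\xi_{\tau(n)}\le\xi_{\tau(n)+1}$ for all $n\in\N$ and $\xi_n\le\xi_{\tau(n)+1}$ for all $n\ge N$. The first of these properties is exactly the trigger condition appearing in the statement, so for this particular $\tau$ both conclusions of the hypothesis become available: the recursive estimate $\xi_{\tau(n)+1}\le(1-\alpha_{\tau(n)})\xi_{\tau(n)}+\alpha_{\tau(n)}\gamma_{\tau(n)}$ for all $n$, and $\limsup_{n\to\infty}\gamma_{\tau(n)}\le 0$.

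Next I would exploit the monotonicity $\xi_{\tau(n)}\le\xi_{\tau(n)+1}$ to simplify the recursive estimate: replacing $\xi_{\tau(n)}$ on the right-hand side by the larger quantity $\xi_{\tau(n)+1}$ gives $\xi_{\tau(n)+1}\le(1-\alpha_{\tau(n)})\xi_{\tau(n)+1}+\alpha_{\tau(n)}\gamma_{\tau(n)}$, and cancelling (which is legitimate since $\alpha_{\tau(n)}>0$) yields $\xi_{\tau(n)+1}\le\gamma_{\tau(n)}$ for every $n\in\N$. Combining this with $\limsup_{n\to\infty}\gamma_{\tau(n)}\le 0$ and the nonnegativity of $\xi_{\tau(n)+1}$ forces $\xi_{\tau(n)+1}\to 0$ as $n\to\infty$.

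Finally, for $n\ge N$ the estimate $0\le\xi_n\le\xi_{\tau(n)+1}$ together with $\xi_{\tau(n)+1}\to 0$ gives $\xi_n\to 0$; in particular $\{\xi_n\}$ is convergent, contradicting the assumption. Hence $\{\xi_n\}$ converges, which is the assertion. I do not expect any genuine obstacle here: the only point that needs a moment's attention is to confirm that the function $\tau$ delivered by Lemma~\ref{l:neo_mainge} satisfies $\xi_{\tau(n)}\le\xi_{\tau(n)+1}$ for \emph{all} $n$ (rather than merely from some index on), so that the two hypotheses of the present lemma apply to it without modification.
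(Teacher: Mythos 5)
Your proposal is correct and follows essentially the same route as the paper's proof: contradiction via Lemma~\ref{l:neo_mainge}, absorbing $\xi_{\tau(n)}$ into $\xi_{\tau(n)+1}$ and cancelling to get $\xi_{\tau(n)+1}\leq\gamma_{\tau(n)}$, then contradicting non-convergence (the paper phrases the final step as $0<\limsup_n\xi_n\leq\limsup_n\gamma_{\tau(n)}\leq 0$, while you derive $\xi_n\to 0$ directly, but these are the same argument).
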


\begin{proof}
 Assume that $\{ \xi_n \}$ is not convergent. 
 Then Lemma~\ref{l:neo_mainge} implies that there exist $N\in \N$ and an
 eventually increasing function $\tau\colon \N \to \N$ such that
 $\xi_{\tau(n)} \le \xi_{\tau(n)+1}$ for all $n\in \N$ and 
 $\xi_n \le \xi_{\tau(n)+1}$ for all $n \geq N$.
 Thus it follows that 
 \[
 \xi_{\tau(n)+1} \leq (1- \alpha_{\tau(n)}) \xi_{\tau(n)}
 + \alpha_{\tau(n)} \gamma_{\tau(n)}  
 \leq (1- \alpha_{\tau(n)}) \xi_{\tau(n)+1}
 + \alpha_{\tau(n)} \gamma_{\tau(n)}
 \]
 for all $n \in \N$. 
 Hence we deduce from $\alpha_{\tau(n)} > 0$ that
 $\xi_{\tau(n)+1} \leq \gamma_{\tau(n)}$ for all $n \in \N$. 
 Therefore, by assumption, we conclude that
 \[
 0 < \limsup_{n\to\infty} \xi_n
 \leq \limsup_{n\to\infty} \xi_{\tau(n) + 1}
 \leq \limsup_{n\to\infty} \gamma_{\tau(n)} \leq 0 ,
 \]
 which is a contradiction. 
\end{proof}

The following lemma plays a crucial role in Section~\ref{s:main}. 

\begin{lemma}\label{l:tau-eif}
 Let $E$ be a Banach space, $C$ a nonempty subset of $E$, 
 $\{f_n\}$ a sequence of mappings of $C$ into $\R$, 
 $\{g_n\}$ a sequence of mappings of $C$ into $[0,\infty)$, 
 $\tau\colon \N \to \N$ an eventually increasing function, 
 and $\{z_n\}$ a bounded sequence in $C$. 
 Suppose that $f_{\tau(n)}(z_n) \to 0$,
 there exits $p \in C$ such that $f_n (p) = 0$ for all $n \in \N$,
 and $g_n(x_n) \to 0$ whenever $\{x_n\}$ is a bounded
 sequence in $C$ such that $f_n (x_n) \to 0$. 
 Then $g_{\tau(n)}(z_n) \to 0$. 
\end{lemma}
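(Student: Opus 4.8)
The plan is to argue by contradiction, the point being that the index map $\tau$ need not be injective, so one cannot simply read off the desired conclusion from the hypothesis on $\{g_n\}$; instead I would ``fill in'' the (sparse, possibly repetitive) index set $\tau(\N)$ with the point $p$ to manufacture a genuine sequence to which that hypothesis applies.

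First I would suppose, for contradiction, that $g_{\tau(n)}(z_n) \not\to 0$, so that there are $\epsilon_0 > 0$ and a strictly increasing sequence $\{n_j\}$ in $\N$ with $g_{\tau(n_j)}(z_{n_j}) \geq \epsilon_0$ for every $j$. Since $\tau$ is nondecreasing with $\tau(n) \to \infty$, the sequence $\{\tau(n_j)\}$ is nondecreasing and tends to infinity; after passing to a subsequence of $\{n_j\}$ (which preserves the bound $g_{\tau(n_j)}(z_{n_j}) \geq \epsilon_0$), I may assume that $\{\tau(n_j)\}$ is strictly increasing.

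Next I would define a sequence $\{x_k\}$ in $C$ by setting $x_k = z_{n_j}$ when $k = \tau(n_j)$ for the (now unique) $j$, and $x_k = p$ otherwise. Each $x_k$ belongs to $\{z_n : n \in \N\} \cup \{p\} \subset C$, so $\{x_k\}$ is bounded because $\{z_n\}$ is. I would then verify that $f_k(x_k) \to 0$: for $k$ not of the form $\tau(n_j)$ we have $f_k(x_k) = f_k(p) = 0$, while for $k = \tau(n_j)$ we have $f_k(x_k) = f_{\tau(n_j)}(z_{n_j})$, which is small once $j$ is large because $f_{\tau(n)}(z_n) \to 0$ and $n_j \to \infty$; since $\{\tau(n_j)\}$ is strictly increasing, combining these two cases gives genuine convergence $f_k(x_k) \to 0$ as $k \to \infty$.

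Finally, applying the hypothesis to the bounded sequence $\{x_k\}$ gives $g_k(x_k) \to 0$, and hence $g_{\tau(n_j)}(z_{n_j}) = g_{\tau(n_j)}\bigl(x_{\tau(n_j)}\bigr) \to 0$ as $j \to \infty$, contradicting $g_{\tau(n_j)}(z_{n_j}) \geq \epsilon_0$. The only delicate point I anticipate is the bookkeeping caused by $\tau$ failing to be injective: one must make sure the filled-in sequence $\{x_k\}$ is unambiguously defined (this is what the passage to a subsequence with $\{\tau(n_j)\}$ strictly increasing is for) and that $f_k(x_k) \to 0$ holds over all large $k$, not merely along the range of $\tau$.
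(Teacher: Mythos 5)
Your proof is correct and is essentially the paper's own argument: the paper likewise argues by contradiction, extracts a subsequence (via a strictly increasing $\sigma$ with $\tau\circ\sigma$ strictly increasing), and fills in the missing indices with the common zero $p$ to build a full sequence to which the hypothesis on $\{g_n\}$ applies. Your passage to a further subsequence making $\{\tau(n_j)\}$ strictly increasing is exactly the paper's device for making the filled-in sequence well defined, so the two proofs coincide up to notation.
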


\begin{proof}
 Assume that $g_{\tau(n)} (z_n) \not\to 0$.
 Then there exist $\epsilon > 0$  and a strictly increasing function 
 $\sigma \colon \N \to \N$ such that
 $\tau \circ \sigma$ is also strictly increasing and
 \begin{equation}\label{geq_e}
  g_{\tau\circ \sigma(n)}(z_{\sigma(n)}) \geq \epsilon
 \end{equation}
 for all $n \in \N$. Set $\mu = \tau \circ \sigma$ and
 $\range (\mu) = \{\mu(n): n \in \N\}$.  
 Let $\{y_n\}$ be a sequence in $C$ defined by 
 \[
 y_n = \begin{cases}
	z_{\sigma \circ \mu^{-1}(n)} & \text{if $n \in \range (\mu)$};\\
	p & \text{if $n \notin \range (\mu)$}
       \end{cases}
 \]
 for  $n\in \N$. 
 It is clear that $\{ y_n \}$ is bounded, 
 $f_n (y_n)
 = f_{\tau (\sigma \circ \mu^{-1}(n))} ( z_{\sigma \circ\mu^{-1}(n)})$
 for $n \in \range (\mu)$, and $f_n (y_n) = 0$ for $n \notin \range (\mu)$. 
 Since $\sigma \circ \mu^{-1}$ is strictly increasing, 
 we see that $f_n (y_n) \to 0$, and hence $g_n (y_n) \to 0$ by assumption. 
 Since $\mu$ is strictly increasing, it follows from 
 $y_{\mu(n)} = z_{\sigma(\mu^{-1}(\mu(n)))} = z_{\sigma(n)}$ that
 \[
 g_{\tau\circ\sigma (n)} (z_{\sigma(n)})
 = g_{\mu (n)} (y_{\mu (n)}) \to 0, 
 \]
 which contradicts to \eqref{geq_e}. 
\end{proof}

\section{Main results}\label{s:main}

In this section, we prove the following strong convergence theorem and
derive its corollaries. 

\begin{theorem}\label{t:main}
 Let $E$ be a reflexive Banach space whose norm is uniformly G\^{a}teaux
 differentiable, $C$ a nonempty closed convex subset of $E$, 
 $\{S_n\}$ a sequence of self-mappings of $C$, 
 $T$ a nonexpansive self-mapping of $C$, 
 $\{\alpha_n\}$ a sequence in $(0,1]$, $u$ a point in $C$,
 and $\{x_n\}$ a sequence defined by $x_1 \in C$ and 
 \begin{equation}\label{e:x_n}
  x_{n+1}= \alpha_n u + (1-\alpha_n) S_n x_n
 \end{equation}
 for $n\in\N$.
 Suppose that $E$ has the fixed point property for nonexpansive
 mappings, 
 $\alpha_n \to 0$, $\sum_{n=1}^\infty \alpha_n = \infty$,
 $\{S_n\}$ is a strongly nonexpansive sequence, 
 and $\{S_n\}$ satisfies the NST condition (I) with $T$.
 Then $\{x_n\}$ converges strongly to $Qu$, where $Q$ is the sunny
 nonexpansive retraction of $C$ onto $\F(T)$.
\end{theorem}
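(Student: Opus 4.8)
The overall strategy is the classical Halpern--Browder analysis of the iteration \eqref{e:x_n}, with one essential twist: since $\{\alpha_n\}$ is only assumed to satisfy $\alpha_n\to 0$ and $\sum_n\alpha_n=\infty$, we cannot establish $x_{n+1}-x_n\to 0$ directly, so the argument is routed through the eventually-increasing-function machinery of Lemmas~\ref{l:neo_mainge}--\ref{l:tau-eif}. To fix the target point, observe that by the remark following the definition of the NST condition (I) we have $\F(T)=\F(\{S_n\})\neq\emptyset$, so Lemma~\ref{l:takahashi-ueda} applies and provides the sunny nonexpansive retraction $Q$ of $C$ onto $\F(T)$ together with the fact that $z_t\to w:=Qu$ as $t\downarrow 0$, where $z_t$ is the unique solution of $z_t=tu+(1-t)Tz_t$. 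Note that $w\in\F(\{S_n\})$, so $S_nw=w$ for every $n$. Throughout, set $\xi_n=\norm{x_n-w}^2$.

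First I would record the elementary facts. An induction using $\norm{S_nx_n-w}=\norm{S_nx_n-S_nw}\le\norm{x_n-w}$ shows $\norm{x_n-w}\le\max\{\norm{x_1-w},\norm{u-w}\}$, so $\{x_n\}$, $\{S_nx_n\}$ and $\{u-x_n\}$ are bounded. Applying the inequality $\norm{a+b}^2\le\norm{a}^2+2\ip{b}{f}$ (valid for $f\in J(a+b)$; here $J$ is single valued since the norm is G\^{a}teaux differentiable) to $x_{n+1}-w=(1-\alpha_n)(S_nx_n-w)+\alpha_n(u-w)$, and then using $(1-\alpha_n)^2\le 1-\alpha_n$ and $\norm{S_nx_n-w}\le\norm{x_n-w}$, yields
\[
\xi_{n+1}\le(1-\alpha_n)\xi_n+\alpha_n\gamma_n,\qquad\gamma_n:=2\ip{u-w}{J(x_{n+1}-w)},
\]
for all $n\in\N$.

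The heart of the proof is to verify the hypothesis of Lemma~\ref{l:seq+mainge}: for every eventually increasing $\tau\colon\N\to\N$ with $\xi_{\tau(n)}\le\xi_{\tau(n)+1}$ for all $n$, one has $\limsup_n\gamma_{\tau(n)}\le 0$. Fixing such a $\tau$, take square roots in $\xi_{\tau(n)}\le\xi_{\tau(n)+1}$ and insert the bound $\norm{x_{\tau(n)+1}-w}\le\alpha_{\tau(n)}\norm{u-w}+(1-\alpha_{\tau(n)})\norm{S_{\tau(n)}x_{\tau(n)}-w}$; rearranging and using $\alpha_{\tau(n)}\to 0$ gives $\limsup_n\bigl(\norm{x_{\tau(n)}-w}-\norm{S_{\tau(n)}x_{\tau(n)}-w}\bigr)\le 0$, and since this quantity is nonnegative by nonexpansiveness it tends to $0$. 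The delicate point is that the strongly nonexpansive and NST properties are formulated for sequences indexed by $n$, not for the family $\{x_{\tau(n)}\}_n$ paired with the operators $S_{\tau(n)}$ and $T$, and this reindexing obstruction is exactly what Lemma~\ref{l:tau-eif} removes. I would apply it with $f_n(x)=\norm{x-w}-\norm{S_nx-w}$, $g_n(x)=\norm{x-S_nx}+\norm{x-Tx}$, $p=w$ and $z_n=x_{\tau(n)}$; its hypothesis ``$f_n(x_n)\to 0$ implies $g_n(x_n)\to 0$ for bounded $\{x_n\}$'' holds because $f_n(x_n)\to 0$ means $\norm{x_n-w}-\norm{S_nx_n-S_nw}\to 0$, which forces $x_n-S_nx_n\to 0$ by the strongly nonexpansive property (taking $w$ as the constant partner) and hence $x_n-Tx_n\to 0$ by the NST condition (I). The conclusion $g_{\tau(n)}(z_n)\to 0$ yields $x_{\tau(n)}-S_{\tau(n)}x_{\tau(n)}\to 0$ and $x_{\tau(n)}-Tx_{\tau(n)}\to 0$. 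Consequently $x_{\tau(n)+1}-x_{\tau(n)}=\alpha_{\tau(n)}(u-x_{\tau(n)})+(1-\alpha_{\tau(n)})(S_{\tau(n)}x_{\tau(n)}-x_{\tau(n)})\to 0$, so $x_{\tau(n)+1}-Tx_{\tau(n)+1}\to 0$, and Lemma~\ref{l:lemma2.9} applied to the bounded sequence $\{x_{\tau(n)+1}\}_n$ (with $w=\lim_{t\downarrow 0}z_t$) gives $\limsup_n\gamma_{\tau(n)}=2\limsup_n\ip{u-w}{J(x_{\tau(n)+1}-w)}\le 0$. Lemma~\ref{l:seq+mainge} then shows that $\{\xi_n\}$ converges.

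It remains to upgrade this to $\xi_n\to 0$, which is a short bootstrap. Writing $\norm{x_n-w}\to r$, the chain $(\norm{x_{n+1}-w}-\alpha_n\norm{u-w})/(1-\alpha_n)\le\norm{S_nx_n-w}\le\norm{x_n-w}$ together with $\alpha_n\to 0$ forces $\norm{S_nx_n-w}\to r$, so $\norm{x_n-w}-\norm{S_nx_n-w}\to 0$ now holds for the full sequence. As in the previous paragraph, the strongly nonexpansive property and the NST condition (I) give $x_n-S_nx_n\to 0$ and $x_n-Tx_n\to 0$, hence $x_{n+1}-x_n\to 0$ and $x_{n+1}-Tx_{n+1}\to 0$; Lemma~\ref{l:lemma2.9} then gives $\limsup_n\gamma_n\le 0$, and Lemma~\ref{lm:seq} applied to $\xi_{n+1}\le(1-\alpha_n)\xi_n+\alpha_n\gamma_n$, using $\sum_n\alpha_n=\infty$, yields $\xi_n\to 0$, that is, $x_n\to Qu$. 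I expect the main obstacle to be the third paragraph: seeing that the reindexing difficulty in transferring asymptotic regularity along $\tau$ is dissolved precisely by Lemma~\ref{l:tau-eif}, and noting that Lemma~\ref{l:seq+mainge} by itself only delivers convergence of $\{\xi_n\}$, so the separate squeeze argument in the last paragraph is genuinely needed.
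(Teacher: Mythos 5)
Your proposal is correct and follows essentially the same route as the paper: boundedness, the inequality $\xi_{n+1}\le(1-\alpha_n)\xi_n+\alpha_n\gamma_n$, verification of the hypothesis of Lemma~\ref{l:seq+mainge} via Lemma~\ref{l:tau-eif}, Lemma~\ref{l:lemma2.9}, and a final pass with Lemma~\ref{lm:seq}. The only (harmless) cosmetic differences are that you invoke Lemma~\ref{l:tau-eif} once with the combined functional $g_n(x)=\norm{x-S_nx}+\norm{x-Tx}$ instead of twice through Lemma~\ref{l:tau}, and you apply Lemma~\ref{l:lemma2.9} directly to $\{x_{\tau(n)+1}\}$ rather than transferring the $\limsup$ from $\{x_{\tau(n)}\}$ via the uniform norm-to-weak* continuity of $J$.
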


To show Theorem~\ref{t:main}, we need the lemmas below: 

\begin{lemma}\label{l:tau}
 Let $E$ be a Banach space, $C$ a nonempty subset of $E$, 
 $\tau\colon \N \to \N$ an eventually increasing function, 
 and $\{S_n\}$ a sequence of mappings of $C$ into $E$ such that
 $\F (\{S_n\})$ is nonempty.
 Then the following hold:
 \begin{enumerate}
  \item If $\{S_n\}$ is a strongly nonexpansive sequence and 
	$\{z_n\}$ is a bounded sequence in $C$ such that 
	$\norm{z_n - p} - \norm{S_{\tau(n)} z_n - p} \to 0$
	for some $p \in \F (\{S_n\})$, then 
	$S_{\tau(n)} z_n - z_n \to 0$. 
  \item	If $\{S_n\}$ satisfies the NST condition (I) with a mapping
	$T\colon C \to E$, then so does $\{ S_{\tau(n)} \}$. 
 \end{enumerate}
\end{lemma}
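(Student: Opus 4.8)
The plan is to prove the two parts separately, and in each case to reduce the claim about the composed sequence $\{S_{\tau(n)}\}$ to the corresponding property of $\{S_n\}$ itself. For part~(1), suppose $\{z_n\}$ is bounded and $\norm{z_n - p} - \norm{S_{\tau(n)}z_n - p} \to 0$ for some $p \in \F(\{S_n\})$. I would apply Lemma~\ref{l:tau-eif} with the choices $f_n(x) = \bigl\lvert \norm{x-p} - \norm{S_n x - p} \bigr\rvert$ and $g_n(x) = \norm{S_n x - x}$. These are well-defined mappings of $C$ into $[0,\infty)$, and $f_n(p) = 0$ for all $n$ since $S_n p = p$. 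The defining property of a strongly nonexpansive sequence says precisely that if $\{x_n\}$ is a sequence in $C$ with $\{x_n - p\}$ bounded (automatic here, as $p$ is fixed and $\{x_n\}$ bounded) and $\norm{x_n - p} - \norm{S_n x_n - S_n p} \to 0$, then $x_n - p - (S_n x_n - S_n p) \to 0$, i.e. $x_n - S_n x_n \to 0$; so $f_n(x_n) \to 0$ implies $g_n(x_n) \to 0$. Finally $f_{\tau(n)}(z_n) \to 0$ by hypothesis. Lemma~\ref{l:tau-eif} then yields $g_{\tau(n)}(z_n) = \norm{S_{\tau(n)}z_n - z_n} \to 0$, which is the assertion.

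For part~(2), assume $\{S_n\}$ satisfies the NST condition (I) with $T$, so $\F(\{S_n\})$ is nonempty, $\F(T) \subset \F(\{S_n\})$, and $x_n - S_n x_n \to 0$ for any bounded $\{x_n\}$ in $C$ implies $x_n - Tx_n \to 0$. I must verify the same three properties for $\{S_{\tau(n)}\}$. The first is immediate once one checks $\F(\{S_{\tau(n)}\}) = \F(\{S_n\})$: since $\tau$ is eventually increasing, $\range(\tau)$ is infinite, so $\bigcap_n \F(S_{\tau(n)}) \supset \bigcap_n \F(S_n)$ trivially and, because every value $\tau(n)$ is some index and the range is cofinal, any point fixed by all $S_{\tau(n)}$ is fixed by $S_m$ for all $m \ge \tau(1)$; combined with $\F(T) \subset \F(\{S_n\})$ and the Remark (which gives $\F(\{S_n\}) \subset \F(T)$), one in fact has $\F(\{S_n\}) = \F(T)$, which settles both the nonemptiness and the inclusion $\F(T) \subset \F(\{S_{\tau(n)}\})$. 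For the convergence clause, let $\{x_n\}$ be bounded in $C$ with $x_n - S_{\tau(n)}x_n \to 0$; I would apply Lemma~\ref{l:tau-eif} again, now with $f_n(x) = \norm{S_n x - x}$ and $g_n(x) = \norm{Tx - x}$. Here $f_n(p) = 0$ for any $p \in \F(\{S_n\})$, the hypothesis $f_{\tau(n)}(x_n) \to 0$ is exactly $x_n - S_{\tau(n)}x_n \to 0$, and the NST condition (I) for $\{S_n\}$ is precisely the implication $f_n(x_n) \to 0 \Rightarrow g_n(x_n) \to 0$. The lemma gives $g_{\tau(n)}(x_n) = \norm{Tx_n - x_n} \to 0$, as required.

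The only genuinely delicate point is the bookkeeping around $\F(\{S_{\tau(n)}\})$ versus $\F(\{S_n\})$ in part~(2): one needs that $\tau$ being eventually increasing makes its range cofinal in $\N$, so that intersecting over the subsequence of indices loses nothing. Everything else is a direct appeal to Lemma~\ref{l:tau-eif}, with the two auxiliary sequences of functions $\{f_n\}$ and $\{g_n\}$ chosen to match, respectively, the strongly-nonexpansive-sequence condition and the NST condition (I). I do not anticipate any need for the uniform Gâteaux differentiability or reflexivity hypotheses here; this lemma is purely about sequences of mappings and the eventually increasing reindexing, and the Banach-space structure of $E$ enters only through the norm used to phrase the two conditions.
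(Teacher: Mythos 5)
Your proof is essentially the paper's: both parts are reduced to Lemma~\ref{l:tau-eif} with the same choices of $\{f_n\}$ and $\{g_n\}$ (the absolute value in your $f_n$ for part~(1) is harmless, and in fact redundant, since nonexpansiveness makes $\norm{x-p}-\norm{S_n x - p}$ nonnegative), and the convergence clause of part~(2) is handled identically.

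One assertion in your part~(2) is false, though fortunately dispensable. You claim that because the range of $\tau$ is cofinal in $\N$, any point fixed by every $S_{\tau(n)}$ is fixed by $S_m$ for all $m \geq \tau(1)$, whence $\F(\{S_{\tau(n)}\}) = \F(\{S_n\})$. An eventually increasing $\tau$ need not hit every index in any tail of $\N$: take $\tau(n)=2n$; a point fixed by all even-indexed mappings need not be fixed by any odd-indexed one, so in general $\F(\{S_{\tau(n)}\}) \supsetneq \F(\{S_n\})$. What the argument actually needs is only the trivial inclusion $\F(\{S_n\}) \subset \F(\{S_{\tau(n)}\})$, which you also state: combined with $\F(T) \subset \F(\{S_n\}) \neq \emptyset$ it gives $\F(T) \subset \F(\{S_{\tau(n)}\}) \neq \emptyset$, and the reverse inclusion is never used. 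This is exactly how the paper proceeds, so deleting the equality claim leaves a correct proof.
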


\begin{proof}
 We first show (1). 
 Let $f_n \colon C \to \R$ and $g_n \colon C \to [0,\infty)$ be functions
 defined by
 $f_n (x) = \norm{x-p} - \norm{S_n x - p}$
 and 
 $g_n (x) = \norm{S_n x - x}$ for all $x \in C$ and $n \in \N$,
 respectively. 
 Then $f_{\tau(n)}(z_n) \to 0$ and $f_n (p) = 0$ for all $n \in \N$. 
 Since $\{S_n\}$ is a strongly nonexpansive sequence, it follows that
 $g_n(x_n) \to 0$ whenever $\{x_n\}$ is a bounded sequence in $C$ and
 $f_n(x_n) \to 0$. Therefore Lemma~\ref{l:tau-eif} implies that 
 $\norm{S_{\tau(n)} z_n - z_n} = g_{\tau(n)} (z_n)\to 0$. 

 We next show (2). It is clear that
 $\F(\{S_n\}) \subset \F(S_{\tau(n)})$ for every $n \in \N$. 
 Thus $\F(\{S_n\}) \subset \F\bigl( \{ S_{\tau(n)} \}\bigr)$. 
 Since $\{S_n\}$ satisfies the NST condition (I) with $T$,
 we know that $\F \bigl( \{ S_n \} \bigr)$ is nonempty and
 $\F(T) \subset \F \bigl( \{ S_n \} \bigr)$. 
 Hence $\F \bigl( \{ S_{\tau(n)} \} \bigr)$ is nonempty and
 $\F(T) \subset \F \bigl( \{S_{\tau(n)}\} \bigr)$. 
 Let $\{z_n\}$ be a bounded sequence in $C$ such that
 $z_n - S_{\tau(n)} z_n \to 0$. It is enough to show that $z_n - T z_n \to 0$. 
 Let $f_n \colon C \to \R$ and $g_n \colon C \to [0,\infty)$ be functions
 defined by $f_n (x) = \norm{S_n x - x}$ and 
 $g_n (x) = \norm{Tx - x}$ for all $x \in C$ and $n \in \N$, respectively. 
 Let $p \in \F(\{S_n\})$. 
 Then $f_{\tau(n)}(z_n) \to 0$ and $f_n (p) = 0$ for all $n \in \N$.
 Since $\{S_n\}$ satisfies the NST condition (I) with $T$, it follows that 
 $g_n(x_n) \to 0$ whenever $\{x_n\}$ is a bounded sequence in $C$ such that
 $f_n(x_n) \to 0$. Therefore Lemma~\ref{l:tau-eif} implies that
 $\norm{z_n - T z_n} = g_{\tau(n)} (z_n)\to 0$. 
\end{proof}

The proof of Theorem~\ref{t:main} is based on the following lemma: 

\begin{lemma}\label{l:main}
 Let $E$ be a Banach space whose norm is uniformly G\^{a}teaux
 differentiable. 
 Let $C$, $\{S_n\}$, $T$, $\{\alpha_n\}$, $u$, and $\{x_n\}$ 
 be the same as in Theorem~\ref{t:main}. 
 Suppose that $z_t \to w \in \F(T)$ as $t \downarrow 0$,
 where $z_t$ is a unique point in $C$ such that $z_t = tu + (1-t) Tz_t$
 for $t \in (0,1)$.
 Then $\{x_n\}$ converges strongly to $w$. 
\end{lemma}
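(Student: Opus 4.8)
The plan is to establish strong convergence of $\{x_n\}$ to $w$ via a case analysis governed by the monotonicity behavior of $\xi_n = \norm{x_n - w}^2$, handled uniformly by Lemma~\ref{l:seq+mainge}. First I would record the basic estimates. Since each $S_n$ is nonexpansive and fixes $w$ (as $\F(T) \subset \F(\{S_n\})$ by the NST condition), the recursion $x_{n+1} = \alpha_n u + (1-\alpha_n) S_n x_n$ gives $\norm{x_{n+1} - w} \le \alpha_n \norm{u-w} + (1-\alpha_n)\norm{x_n - w}$, so $\{x_n\}$ is bounded; set $M = \sup_n \norm{x_n - w}$ and $D = \sup_n \norm{S_n x_n - w}$, both finite. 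Using \eqref{ieq:|x+y|^2} with $x = (1-\alpha_n)(S_n x_n - w)$ and $y = \alpha_n(u-w)$ and $f = J(x_{n+1} - w)$, I would derive the key inequality
\[
 \xi_{n+1} = \norm{x_{n+1} - w}^2
 \le (1-\alpha_n)^2 \xi_n + 2\alpha_n \ip{u-w}{J(x_{n+1} - w)}
 \le (1-\alpha_n)\xi_n + \alpha_n \gamma_n,
\]
where $\gamma_n = \alpha_n M^2 + 2\ip{u-w}{J(x_{n+1} - w)}$ (absorbing the $\alpha_n^2 \xi_n$ term since $(1-\alpha_n)^2 \le 1 - \alpha_n + \alpha_n^2$).

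Next I would verify the hypothesis of Lemma~\ref{l:seq+mainge}: the above inequality holds along any eventually increasing $\tau$, so it remains to show $\limsup_n \gamma_{\tau(n)} \le 0$ whenever $\tau$ is eventually increasing with $\xi_{\tau(n)} \le \xi_{\tau(n)+1}$ for all $n$. Since $\alpha_n \to 0$, the term $\alpha_{\tau(n)} M^2 \to 0$, so it suffices to prove $\limsup_n \ip{u-w}{J(x_{\tau(n)+1} - w)} \le 0$. From $\xi_{\tau(n)} \le \xi_{\tau(n)+1} \le (1-\alpha_{\tau(n)})\xi_{\tau(n)} + \alpha_{\tau(n)}\gamma_{\tau(n)}$ and $\alpha_{\tau(n)} > 0$ I get $\xi_{\tau(n)} \le \gamma_{\tau(n)}$, hence a quantitative lower bound forcing $\alpha_{\tau(n)}\xi_{\tau(n)}$ and related quantities under control; more directly, expanding $\norm{x_{\tau(n)+1} - w}^2$ via Lemma~\ref{l:uc-ft}-type reasoning is not available (no uniform convexity here), so instead I would argue: $\norm{x_{n+1} - S_n x_n} = \alpha_n \norm{u - S_n x_n} \to 0$, and combined with $\norm{x_{n+1} - w} \ge \norm{x_{\tau(n)} - w}$ type control along $\tau$, deduce $\norm{S_{\tau(n)} x_{\tau(n)} - w} - \norm{x_{\tau(n)} - w} \to 0$, so $\norm{x_{\tau(n)} - w} - \norm{S_{\tau(n)} x_{\tau(n)} - w} \to 0$.

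Then by Lemma~\ref{l:tau}(1), applied with $z_n = x_{\tau(n)}$ and $p = w \in \F(\{S_n\})$, I obtain $S_{\tau(n)} x_{\tau(n)} - x_{\tau(n)} \to 0$. By Lemma~\ref{l:tau}(2), $\{S_{\tau(n)}\}$ satisfies the NST condition (I) with $T$, so from $x_{\tau(n)} - S_{\tau(n)} x_{\tau(n)} \to 0$ I conclude $x_{\tau(n)} - T x_{\tau(n)} \to 0$. Now Lemma~\ref{l:lemma2.9}, applied to the bounded sequence $\{x_{\tau(n)}\}$ (using the hypothesis $z_t \to w$), yields $\limsup_n \ip{u-w}{J(x_{\tau(n)} - w)} \le 0$; since $\norm{x_{\tau(n)+1} - x_{\tau(n)}} \le \alpha_{\tau(n)}\norm{u - S_{\tau(n)} x_{\tau(n)}} \to 0$ and $J$ is uniformly norm-to-weak$^*$ continuous on bounded sets (uniform Gâteaux differentiability), the same holds with $x_{\tau(n)+1}$ in place of $x_{\tau(n)}$. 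Thus $\limsup_n \gamma_{\tau(n)} \le 0$, Lemma~\ref{l:seq+mainge} gives that $\{\xi_n\}$ converges, and finally feeding this back—now using that the limit inequality holds for \emph{all} $n$ along with $\alpha_n \to 0$, $\sum \alpha_n = \infty$, and a second application of Lemma~\ref{l:lemma2.9} along the whole sequence (or along a subsequence realizing $\limsup$)—into Lemma~\ref{lm:seq} forces $\xi_n \to 0$, i.e.\ $x_n \to w$.

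The main obstacle I anticipate is the step showing $\norm{x_{\tau(n)} - w} - \norm{S_{\tau(n)} x_{\tau(n)} - w} \to 0$: one must extract this asymptotic regularity purely from the monotonicity $\xi_{\tau(n)} \le \xi_{\tau(n)+1}$ together with $\alpha_n \to 0$, without any convexity of the norm, which is precisely why the strongly nonexpansive sequence hypothesis (rather than mere nonexpansiveness) is essential and why Lemma~\ref{l:tau} was isolated. Care is also needed to ensure the $\limsup$ arguments for $\gamma_n$ and $\gamma_{\tau(n)}$ are not circular, by keeping the two invocations of Lemma~\ref{lm:seq} and Lemma~\ref{l:seq+mainge} logically separated.
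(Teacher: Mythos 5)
Your proposal is correct and follows essentially the same route as the paper's proof: the same basic inequalities from \eqref{ieq:|x+y|^2} and nonexpansiveness, the same use of Lemma~\ref{l:seq+mainge} together with Lemma~\ref{l:tau} and Lemma~\ref{l:lemma2.9} along an eventually increasing $\tau$ to get convergence of $\{\norm{x_n-w}\}$, and a final pass through Lemma~\ref{lm:seq}. The step you flag as the main obstacle is resolved exactly as you sketch ($0\le\norm{x_{\tau(n)}-w}-\norm{S_{\tau(n)}x_{\tau(n)}-w}\le\norm{x_{\tau(n)+1}-w}-\norm{S_{\tau(n)}x_{\tau(n)}-w}\le\alpha_{\tau(n)}\norm{u-w}\to 0$), and the compressed final step just repeats the same chain (norm convergence $\Rightarrow x_n-S_nx_n\to 0$ by strong nonexpansiveness $\Rightarrow x_n-Tx_n\to 0$ by NST $\Rightarrow$ Lemma~\ref{l:lemma2.9}) along the whole sequence.
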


\begin{proof} 
 By the assumption that $\{S_n\}$ satisfies the NST condition (I) with
 $T$, we see that $w \in \F(S_n)$ for all $n \in \N$.
 Since $S_n$ is nonexpansive, it follows from~\eqref{e:x_n} that
 \begin{align}
  \norm{x_{n+1} - w} \label{basic1}
  &\leq \alpha_n \norm{u-w} + (1-\alpha_n) \norm{S_n x_n - w} \\
  &\leq \alpha_n \norm{u-w} + (1-\alpha_n) \norm{x_n - w} \notag
 \end{align}
 for all $n\in \N$. 
 Thus, by induction on $n$, we have
 \[
  \norm{S_n x_n - w} \leq \norm{x_n - w} 
 \leq \max \{ \norm{u-w}, \norm{x_1 - w} \}. 
 \]
 Therefore, $\{x_n\}$ and $\{S_n x_n\}$ are bounded. 
 Hence, by the assumption that $\alpha_n \to 0$, we see that 
 \begin{equation}\label{x_{n+1}-S_nx_n-to0}
  x_{n+1} - S_n x_n = \alpha_n (u - S_n x_n)\to 0.
 \end{equation}
 We also deduce from~\eqref{ieq:|x+y|^2} that 
 \begin{align}
  \begin{split} \label{basic2}
   \norm{x_{n+1} - w}^2 
   &= \norm{\alpha_n (u-w) + (1-\alpha_n)(S_n x_n - w)}^2 \\
   &\leq (1-\alpha_n)^2 \norm{S_n x_n - w}^2 
   + 2 \alpha_n \ip{u-w}{J(x_{n+1} - w)}\\
   &\leq (1-\alpha_n) \norm{x_n - w}^2 
   + 2 \alpha_n \ip{u-w}{J(x_{n+1} - w)} 
  \end{split}
 \end{align}
 for all $n\in \N$. 

 We next show that $\{ \norm{x_n - w} \}$ is convergent by using
 Lemma~\ref{l:seq+mainge}.
 Let $\tau \colon \N \to \N$ be an eventually increasing function. 
 Suppose that $\norm{x_{\tau(n)} - w} \le \norm{x_{\tau(n)+1} - w}$
 for all $n \in \N$. 
 Since $S_{\tau(n)}$ is nonexpansive, $w \in \F(S_{\tau(n)})$,
 and $\alpha_{\tau(n)}\to 0$, 
 it follows from~\eqref{basic1} that 
 \begin{align*}
  0 &\leq \norm{x_{\tau(n)} - w} - \norm{S_{\tau(n)} x_{\tau(n)} -w} \\
  &\leq \norm{x_{\tau(n)+1} - w} - \norm{S_{\tau(n)} x_{\tau(n)} -w}
  \leq \alpha_{\tau(n)} \norm{u-w} \to 0
 \end{align*}
 as $n\to \infty$. 
 Thus Lemma~\ref{l:tau} implies that
 \begin{equation}\label{135108_16Feb16}
  S_{\tau(n)} x_{\tau(n)} -  x_{\tau(n)} \to 0.
 \end{equation}
 Since $\{S_n\}$ satisfies the NST condition (I) with $T$,
 it follows~from Lemma~\ref{l:tau} that
 $x_{\tau(n)} - T x_{\tau(n)} \to 0$. 
 Consequently, Lemma~\ref{l:lemma2.9} implies that
 \begin{equation}\label{eq:limsup_0-tau_n}
  \limsup_{n\to \infty} \ip{u-w}{J(x_{\tau(n)} - w)} \leq 0. 
 \end{equation}
 Since $J$ is uniformly norm-to-weak* continuous on a bounded set and 
 \[
 \norm{x_{\tau(n)+1}- x_{\tau(n)}}
 \leq \norm{x_{\tau(n)+1} - S_{\tau(n)}x_{\tau(n)}} +
 \norm{S_{\tau(n)} x_{\tau(n)}- x_{\tau(n)}}  \to 0
 \]
 by~\eqref{x_{n+1}-S_nx_n-to0} and~\eqref{135108_16Feb16},
 it turns out that 
 \begin{equation}\label{eq:uG}
  \ip{u-w}{J(x_{\tau(n) + 1} -w) - J(x_{\tau(n)} - w)} \to 0.   
 \end{equation}
 Thus, by virtue of~\eqref{eq:limsup_0-tau_n} and \eqref{eq:uG},
 we have 
 \begin{equation}\label{eq:limsup_0-tau_n+1}
  \limsup_{n\to \infty} \ip{u-w}{J(x_{\tau(n)+1} - w)} \leq 0. 
 \end{equation}
 According to~\eqref{basic2} and \eqref{eq:limsup_0-tau_n+1}, we
 conclude from Lemma~\ref{l:seq+mainge} that $\{ \norm{x_n -w}\}$ is
 convergent.

 We lastly show that $x_n \to w$. 
 Since $S_n$ is nonexpansive, $w \in \F(S_n)$,
 and $\{ \norm{x_n -w}\}$ is convergent, 
 it follows from~\eqref{basic1} and $\alpha_n \to 0$ that 
 \begin{align*}
  0 &\leq \norm{x_n - w} - \norm{S_n x_n - w}
  \leq \norm{x_n - w} - (1-\alpha_n)\norm{S_n x_n - w}  \\
  &\leq \norm{x_n - w} - \norm{x_{n+1} - w} + \alpha_n \norm{u - w}
  \to 0.
 \end{align*}
 Thus $x_n - S_n x_n \to 0$ by the assumption that 
 $\{S_n\}$ is a strongly nonexpansive sequence,
 and hence $x_n - Tx_n \to 0$
 by the assumption that $\{S_n\}$ satisfies the NST condition (I) with
 $T$. 
 Consequently, Lemma~\ref{l:lemma2.9} shows that 
 \[
 \limsup_{n\to \infty} \ip{u-w}{J(x_{n+1} - w)}
 = \limsup_{n\to \infty} \ip{u-w}{J(x_n - w)} \leq 0. 
 \]
 Taking into account $\sum_{n=1}^\infty \alpha_n = \infty$
 and~\eqref{basic2}, we deduce from Lemma~\ref{lm:seq} that
 $\norm{x_n - w}\to 0$. This completes the proof. 
\end{proof}

\begin{proof}[Proof of Theorem~\ref{t:main}]
 Let $z_t$ be the same as in Lemma~\ref{l:main} for $t \in (0,1)$. 
 Then it follows from Lemma~\ref{l:takahashi-ueda} that 
 $z_t \to Qu$ as $t \downarrow 0$.
 Therefore Lemma~\ref{l:main} implies the conclusion. 
\end{proof}

It is known that if a Banach space $E$ is uniformly convex, then $E$ is
reflexive and has the fixed point property for nonexpansive mappings;
see~\cite{MR1864294}.
Thus the following corollary is a direct consequence of
Theorem~\ref{t:main}. 

\begin{corollary}\label{c:main}
 Let $E$ be a uniformly convex Banach space 
 whose norm is uniformly G\^{a}teaux differentiable. 
 Let $C$, $\{S_n\}$, $T$, $\{\alpha_n\}$, $u$, $\{x_n\}$, and $Q$ be the
 same as in Theorem~\ref{t:main}.
 Then $\{x_n\}$ converges strongly to $Qu$. 
\end{corollary}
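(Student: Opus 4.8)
The plan is to derive Corollary~\ref{c:main} directly from Theorem~\ref{t:main} by verifying that every hypothesis of the theorem is met in the uniformly convex setting. First I would recall the standard fact, cited from~\cite{MR1864294}, that a uniformly convex Banach space $E$ is reflexive and has the fixed point property for nonexpansive mappings. Thus, once $E$ is assumed uniformly convex and to have a uniformly G\^{a}teaux differentiable norm, the space automatically satisfies the two structural assumptions on $E$ required in Theorem~\ref{t:main} (reflexivity and the fixed point property), while the differentiability hypothesis is shared verbatim.

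Next I would observe that the data $C$, $\{S_n\}$, $T$, $\{\alpha_n\}$, $u$, $\{x_n\}$, and $Q$ are, by the statement of the corollary, exactly the objects occurring in Theorem~\ref{t:main}: $C$ is a nonempty closed convex subset of $E$, the sequence $\{x_n\}$ is generated by~\eqref{e:x_n}, the parameters satisfy $\alpha_n \to 0$ and $\sum_{n=1}^\infty \alpha_n = \infty$, the family $\{S_n\}$ is a strongly nonexpansive sequence satisfying the NST condition~(I) with $T$, and $Q$ is the sunny nonexpansive retraction of $C$ onto $\F(T)$ (whose existence is guaranteed by Lemma~\ref{l:takahashi-ueda} under precisely these hypotheses). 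Hence all hypotheses of Theorem~\ref{t:main} hold.

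Finally, applying Theorem~\ref{t:main} yields that $\{x_n\}$ converges strongly to $Qu$, which is the desired conclusion. In short, the argument is:
\begin{align*}
\text{$E$ uniformly convex} \;&\Longrightarrow\; \text{$E$ reflexive and has the fixed point property for nonexpansive mappings,}
\end{align*}
after which Theorem~\ref{t:main} applies verbatim. There is essentially no obstacle here: the only non-trivial input is the implication from uniform convexity to reflexivity plus the fixed point property, and that is a classical result quoted from the literature. The corollary is purely a specialization, so the ``proof'' is a one-line invocation once the structural implications are noted.
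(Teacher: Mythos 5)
Your proposal is correct and matches the paper's own argument exactly: the paper likewise notes that uniform convexity implies reflexivity and the fixed point property for nonexpansive mappings (citing the same reference) and then invokes Theorem~\ref{t:main} directly. Nothing further is needed.
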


\begin{remark}
 Corollary~\ref{c:main} is a generalization
 of~\cite{MR2680036}*{Theorem~10}.
 Indeed, under the assumptions of \cite{MR2680036}*{Theorem 10},
 we can verify that $\{T_n\}$ satisfies the NST condition (I) with $T$
 by using \cite{MR2338104}*{Lemma 3.2}.  
\end{remark}

Using Theorem~\ref{t:main} and Example~\ref{ex:const-SNS},
we obtain the following corollary: 

\begin{corollary}\label{c:single}
 Let $E$, $C$, $\{\alpha_n\}$, and $u$ be the same as in
 Theorem~\ref{t:main}. Let $T$ be a strongly nonexpansive self-mapping
 of $C$ and $\{x_n\}$ a sequence defined by $x_1 \in C$ and
 $x_{n+1}= \alpha_n u + (1-\alpha_n) T x_n$
 for $n\in\N$.
 Suppose that $\F(T)$ is nonempty. 
 Then $\{x_n\}$ converges strongly to $Qu$, where
 $Q$ is the sunny nonexpansive retraction of $C$ onto $\F(T)$. 
\end{corollary}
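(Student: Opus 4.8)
The plan is to deduce Corollary~\ref{c:single} directly from Theorem~\ref{t:main} by realizing the single-mapping iteration as a special case of the constant strongly nonexpansive sequence treated in Example~\ref{ex:const-SNS}. First I would set $S_n = T$ for every $n \in \N$. Then the recursion $x_{n+1} = \alpha_n u + (1-\alpha_n) T x_n$ becomes exactly $x_{n+1} = \alpha_n u + (1-\alpha_n) S_n x_n$, so $\{x_n\}$ is the sequence generated by~\eqref{e:x_n} for this choice of $\{S_n\}$.

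Next I would verify the hypotheses of Theorem~\ref{t:main}. The space $E$, the set $C$, the parameter sequence $\{\alpha_n\}$, and the point $u$ satisfy the required conditions since they are taken to be the same as in Theorem~\ref{t:main}; in particular $\alpha_n \to 0$ and $\sum_{n=1}^\infty \alpha_n = \infty$. Since $T$ is strongly nonexpansive, Example~\ref{ex:const-SNS} tells us that $\{S_n\}$ is a strongly nonexpansive sequence. Since $\F(T)$ is nonempty by assumption, the same example tells us that $\{S_n\}$ satisfies the NST condition (I) with $T$ (here $T$ itself plays the role of the auxiliary nonexpansive mapping; note that a strongly nonexpansive mapping is in particular nonexpansive). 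Thus all the hypotheses of Theorem~\ref{t:main} are met.

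Applying Theorem~\ref{t:main} then yields that $\{x_n\}$ converges strongly to $Qu$, where $Q$ is the sunny nonexpansive retraction of $C$ onto $\F(T)$, which is precisely the assertion of the corollary. I do not expect any genuine obstacle: the only point requiring a word of care is the observation that a strongly nonexpansive self-mapping is nonexpansive (so that it qualifies as the mapping $T$ in the NST condition), and that $\F(\{S_n\}) = \F(T)$ for the constant sequence, so that the retraction delivered by the theorem is onto the correct set. Everything else is a direct substitution into the hypotheses already established.
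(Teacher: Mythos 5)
Your proposal is correct and follows exactly the route the paper intends: the paper derives Corollary~\ref{c:single} by taking $S_n = T$ and invoking Example~\ref{ex:const-SNS} to verify that the constant sequence is strongly nonexpansive and satisfies the NST condition (I) with $T$, then applying Theorem~\ref{t:main}. Your additional remarks that a strongly nonexpansive mapping is nonexpansive and that $\F(\{S_n\}) = \F(T)$ are the right points of care and match the paper's implicit reasoning.
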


\begin{remark}
 Corollary~\ref{c:single} is related
 to~\cite{MR2680036}*{Theorem~4~(1)}. 
 Indeed, it is known that a uniformly smooth Banach space
 is reflexive and has the fixed point property for nonexpansive
 mappings, and moreover,
 the norm is uniformly G\^{a}teaux differentiable;
 see~\cites{MR1864294,MR689566,MR1074005}.
\end{remark}

\section{Applications} \label{s:app}

In this section, we deal with three applications of Theorem~\ref{t:main} and
Corollary~\ref{c:main}.

\subsection{Common fixed points of a sequence of nonexpansive mappings}

We first address the problem of approximating a common fixed point of a
sequence of nonexpansive mappings. Using Corollary~\ref{c:main},
we obtain the following theorem: 

\begin{theorem}\label{t:CFPP}
 Let $E$ be a uniformly convex Banach space whose norm is uniformly
 G\^{a}teaux differentiable,
 $C$ a nonempty closed convex subset of $E$, 
 $\{T_n\}$ a sequence of nonexpansive self-mappings of $C$, 
 $\{\alpha_n\}$ the same as in Theorem~\ref{t:main},
 $\{\beta_n^k\}$ a double sequence in $(0,1]$ indexed by
 $n \in \N$ and $k \in \{1,\dotsc,n\}$, 
 and $\{\gamma_n\}$ a sequence in $(0,1)$. 
 Suppose that $\F(\{T_n\})$ is nonempty, 
 $\sum_{k=1}^n \beta_n^k = 1$ for all $n \in \N$, 
 $\inf\{ \beta_n^k: n\geq k\} > 0$ for all $k \in \N$, 
 $\inf_n \gamma_n > 0$, and $\sup_n \gamma_n < 1$. 
 Let $u$ be a point in $C$
 and $\{x_n\}$ a sequence defined by $x_1 \in C$ and
 \[
 x_{n+1}= \alpha_n u + (1-\alpha_n) \biggl[
 \gamma_n x_n + (1- \gamma_n) \sum_{k=1}^n \beta_n^k T_k x_n \biggr]
 \]
 for $n\in\N$.
 Then $\{x_n\}$ converges strongly to $Qu$, where $Q$ is the
 sunny nonexpansive retraction of $C$ onto $\F(\{T_n\})$.
\end{theorem}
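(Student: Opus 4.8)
The plan is to deduce Theorem~\ref{t:CFPP} from Corollary~\ref{c:main} by exhibiting the bracketed mapping as a strongly nonexpansive sequence satisfying the NST condition~(I) with a suitable nonexpansive mapping. Concretely, set $V_n = \sum_{k=1}^n \beta_n^k T_k$ for $n \in \N$. Since each $T_k$ is a nonexpansive self-mapping of the convex set $C$ and $\sum_{k=1}^n \beta_n^k = 1$, each $V_n$ is a nonexpansive self-mapping of $C$. Then put $S_n = \gamma_n I + (1-\gamma_n) V_n$, so that the iteration in the statement becomes exactly $x_{n+1} = \alpha_n u + (1-\alpha_n) S_n x_n$, the scheme~\eqref{e:x_n}. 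Because $\inf_n \gamma_n > 0$ and $\sup_n \gamma_n < 1$, we have $\liminf_n \gamma_n > 0$, so Example~\ref{ex:I+T_n:SNS} applies and $\{S_n\}$ is a strongly nonexpansive sequence. It therefore only remains to produce a nonexpansive self-mapping $T$ of $C$ such that $\{S_n\}$ satisfies the NST condition~(I) with $T$, and then to identify $\F(T)$ with $\F(\{T_n\})$ so that the limit $Qu$ has the asserted meaning.

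For the candidate $T$, the natural choice is an infinite convex combination $Tx = \sum_{k=1}^\infty \omega_k T_k x$ with weights $\omega_k > 0$, $\sum_{k=1}^\infty \omega_k = 1$; since $\inf\{\beta_n^k : n \geq k\} > 0$ for each $k$, one may in fact simply take $\omega_k = \inf\{\beta_n^k : n \geq k\}$ after renormalizing, or any fixed summable-to-one positive sequence. One must first check $T$ is a well-defined nonexpansive self-mapping of $C$: the series converges in $E$ because, fixing $p \in \F(\{T_n\})$, the partial sums are Cauchy (each $\norm{T_k x - p} \le \norm{x-p}$), and $C$ being closed and convex contains the limit; nonexpansiveness is immediate from the triangle inequality. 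Next, $\F(T) = \F(\{T_n\})$: the inclusion $\supset$ is clear, and $\subset$ follows from a strict-convexity-type argument — if $Tx = x$ then $x = \sum_k \omega_k T_k x$ is a convex combination of the points $T_k x$, each lying in the ball of radius $\norm{x-p}$ about $p$, forcing (by uniform convexity, or by the equality case analysis) $T_k x = x$ for every $k$; since $\F(\{T_n\})$ is nonempty, this set is exactly $\F(T)$.

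The technical heart is verifying the NST condition~(I): we must show that whenever $\{x_n\}$ is bounded in $C$ and $x_n - S_n x_n \to 0$, then $x_n - T x_n \to 0$. Since $x_n - S_n x_n = (1-\gamma_n)(x_n - V_n x_n)$ and $1-\gamma_n$ is bounded away from $0$, this gives $x_n - V_n x_n \to 0$, i.e. $x_n - \sum_{k=1}^n \beta_n^k T_k x_n \to 0$. Now I would invoke Lemma~\ref{l:absolutely}. Fix $p \in \F(\{T_n\})$ and consider, for each fixed index $k$, the quantity $\norm{T_k x_n - x_n}$; the difficulty is that $V_n$ mixes infinitely many maps with varying weights, so one extracts information index-by-index. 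A clean route: write $y_j^n = T_j x_n - x_n$ for $j \le n$ (and $y_j^n = 0$ for $j > n$), which is a bounded double sequence since $\{x_n\}$ is bounded and each $T_j$ is nonexpansive; if one can show $\lim_{n\to\infty} y_j^n = 0$ for each fixed $j$, then Lemma~\ref{l:absolutely} with the weights $\omega_j$ yields $\sum_j \omega_j (T_j x_n - x_n) = T x_n - x_n \to 0$, as desired. To get $T_j x_n - x_n \to 0$ for fixed $j$, one uses that the weights $\beta_n^j$ stay bounded below (for $n \ge j$) together with the relation $x_n - \sum_k \beta_n^k T_k x_n \to 0$ and the fact that, by nonexpansiveness, $\norm{T_k x_n - p} \le \norm{x_n - p}$ for all $k$; a Lemma~\ref{l:uc-ft}-style argument in the uniformly convex space $E$ — exploiting that a convex combination of points in a ball has small defect from the center only when the components nearly coincide — pins down each $T_j x_n - x_n \to 0$. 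This uniform-convexity/convex-combination estimate is exactly where I expect the main obstacle to lie; everything else is bookkeeping. Once NST~(I) with $T$ is established, $\F(T) = \F(\{T_n\})$ is nonempty, all hypotheses of Corollary~\ref{c:main} hold, and it delivers $x_n \to Qu$ with $Q$ the sunny nonexpansive retraction of $C$ onto $\F(T) = \F(\{T_n\})$, completing the proof.
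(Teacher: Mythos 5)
Your proposal is correct and follows essentially the same route as the paper: the same decomposition $S_n=\gamma_n I+(1-\gamma_n)V_n$ with $V_n=\sum_{k=1}^n\beta_n^k T_k$, the same auxiliary map $T=\sum_{k}\omega_k T_k$ (the paper takes $\omega_k=2^{-k}$ and quotes Bruck's lemma for $\F(T)=\F(\{T_n\})$), and the same chain Example~\ref{ex:I+T_n:SNS} plus Lemma~\ref{l:absolutely} plus Corollary~\ref{c:main}. The one step you leave as a sketch --- extracting $T_jx_n-x_n\to 0$ from $V_nx_n-x_n\to 0$ --- is carried out in the paper exactly as you anticipate, by splitting $V_n=\beta_n^jT_j+(1-\beta_n^j)U_n$ and applying Lemma~\ref{l:uc-ft} together with the convexity of $\norm{\,\cdot\,}^2$, using $\inf\{\beta_n^j:n\geq j\}>0$.
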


In order to prove Theorem~\ref{t:CFPP}, we need the lemmas below: 

\begin{lemma}[\cite{MR0324491}*{Lemma 3} and its proof]\label{l:bruck}
 Let $E$ be a strictly convex  Banach space, 
 $C$ a nonempty closed convex subset of $E$, 
 $\{T_n\}$ a sequence of nonexpansive mappings of $C$ into $E$, 
 and $\{\lambda_n \}$ a sequence in $(0,1)$. 
 Suppose that $\F( \{T_n \})$ is nonempty
 and $\sum_{n=1}^\infty \lambda_n = 1$. 
 Then $T=\sum_{n=1}^\infty \lambda_n T_n$ is well-defined and
 nonexpansive, and moreover, $\F( \{T_n \}) = \F(T)$. 
\end{lemma}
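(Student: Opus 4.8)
\textbf{Proof proposal for Lemma~\ref{l:bruck}.}
The plan is to establish the three assertions---well-definedness, nonexpansiveness, and the fixed-point identity---in that order, since each relies on the previous one. First I would fix $x \in C$ and show that $\sum_{n=1}^\infty \lambda_n T_n x$ converges. To do this, pick any $p \in \F(\{T_n\})$; since each $T_n$ is nonexpansive and fixes $p$, we have $\norm{T_n x - p} \leq \norm{x-p}$, so $\bigl\{ \norm{T_n x} \bigr\}$ is bounded (by $\norm{x-p} + \norm{p}$). Then Lemma~\ref{l:absolutely}, applied with the double sequence $y_j^n = T_j x - p$ (constant in $n$, hence trivially with vanishing ``$n$-limit'' replaced by an obvious absolute-convergence argument), or more directly the estimate $\sum_{j=1}^m \lambda_j \norm{T_j x - p} \leq \norm{x-p} \sum_{j=1}^m \lambda_j \leq \norm{x-p}$, shows $\sum_{j=1}^\infty \lambda_j (T_j x - p)$ converges absolutely; adding back $\bigl(\sum_j \lambda_j\bigr) p = p$ gives that $Tx = \sum_{n=1}^\infty \lambda_n T_n x$ is well-defined in the (complete) space $E$, and along the way $\norm{Tx - p} \leq \norm{x-p}$.

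For nonexpansiveness, for $x,y \in C$ I would pass to the limit in the partial-sum inequality $\norm{\sum_{n=1}^m \lambda_n(T_n x - T_n y)} \leq \sum_{n=1}^m \lambda_n \norm{T_n x - T_n y} \leq \sum_{n=1}^m \lambda_n \norm{x-y} \leq \norm{x-y}$, using continuity of the norm; convexity of $C$ ensures the partial sums (suitably renormalized) lie in $C$ but in any case $Tx, Ty \in E$ suffices for the estimate. Also $T$ maps $C$ into $C$ because each partial sum $\bigl(\sum_{n=1}^m \lambda_n\bigr)^{-1}\sum_{n=1}^m \lambda_n T_n x$ is a convex combination of points of $C$, hence in the closed convex set $C$, and these converge to $Tx$.

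The inclusion $\F(\{T_n\}) \subset \F(T)$ is immediate: if $T_n z = z$ for all $n$ then $Tz = \sum_n \lambda_n z = z$. The reverse inclusion $\F(T) \subset \F(\{T_n\})$ is the main obstacle and is where strict convexity enters. Suppose $Tz = z$ and fix $p \in \F(\{T_n\})$. Then $\norm{z - p} = \norm{Tz - p} = \norm{\sum_n \lambda_n (T_n z - p)} \leq \sum_n \lambda_n \norm{T_n z - p} \leq \sum_n \lambda_n \norm{z-p} = \norm{z-p}$, so both inequalities are equalities. The second equality forces $\norm{T_n z - p} = \norm{z - p}$ for every $n$ with $\lambda_n > 0$ (which is all $n$). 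The first---equality in the triangle inequality for the convergent series---combined with strict convexity forces all the vectors $T_n z - p$ to point in the same direction; since they all have the same norm $\norm{z-p}$, they are all equal, say to a common vector $v$ with $\norm v = \norm{z-p}$, and then $z - p = Tz - p = \bigl(\sum_n \lambda_n\bigr) v = v = T_n z - p$, whence $T_n z = z$ for all $n$. (If $z = p$ the conclusion is trivial.) The delicate point to get right is the passage from finite to countable sums in the equality case of the triangle inequality; this is handled by noting that equality must already hold at every finite stage $\norm{\sum_{n=1}^m \lambda_n(T_n z - p)} = \sum_{n=1}^m \lambda_n \norm{T_n z - p}$ (any strict inequality at a finite stage would propagate to the limit), and then invoking the standard fact that in a strictly convex space, equality in $\norm{a+b} = \norm a + \norm b$ with $a,b \neq 0$ implies $a$ and $b$ are positive multiples of each other, applied inductively---this is precisely the content of \cite{MR0324491}*{Lemma 3}, which we are entitled to cite.
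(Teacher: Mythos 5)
The paper offers no proof of this lemma at all—it is quoted directly from Bruck (\cite{MR0324491}*{Lemma~3} and its proof)—so there is nothing internal to compare against; your argument is a correct, self-contained reconstruction of the standard one, including the two genuinely delicate points: forcing $\norm{T_nz-p}=\norm{z-p}$ for every $n$ from the second equality, and correctly justifying that equality in the triangle inequality for the infinite series descends to every finite partial sum before strict convexity is applied inductively. One small inaccuracy in an aside: under the stated hypotheses $T_n$ maps $C$ into $E$, not into $C$, so the partial sums $\bigl(\sum_{n=1}^m\lambda_n\bigr)^{-1}\sum_{n=1}^m\lambda_nT_nx$ are convex combinations of points of $E$ rather than of $C$, and the claim $T(C)\subset C$ neither follows nor is asserted by the lemma (it is only needed, and only holds, in the setting of Theorem~\ref{t:CFPP} where the $T_n$ are self-mappings); everything else you need—well-definedness, nonexpansiveness as a map $C\to E$, and $\F(\{T_n\})=\F(T)$—is untouched by this.
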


\begin{lemma}\label{l:check-NST}
 Let $E$ be a uniformly convex Banach space, 
 $C$ a nonempty closed convex subset of $E$, 
 $\{T_n\}$ a sequence of nonexpansive mappings of $C$ into $E$,
 $\{\beta_n^k\}$ the same as in Theorem~\ref{t:CFPP}, 
 $\{\lambda_n\}$ a sequence in $(0,1)$, 
 and $V_n$ a mapping of $C$ into $E$ defined by
 $V_n =  \sum_{k=1}^n \beta_n^k T_k$ for $n \in \N$. 
 Suppose that $\F(\{ T_n \})$ is nonempty
 and $\sum_{n=1}^\infty \lambda_n = 1$. 
 If $\{x_n\}$ is a bounded sequence in $C$ such that $x_n - V_n x_n \to 0$,
 then $\lim_{n\to \infty} \norm{x_n - T_j x_n} = 0$ for all $j \in \N$.
 Moreover, $\{V_n\}$ satisfies the NST condition (I) with a mapping
 $T\colon C\to E$ defined by $T = \sum_{n=1}^\infty \lambda_n T_n$. 
\end{lemma}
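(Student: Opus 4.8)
The plan is to establish the two conclusions in turn, with almost all of the work in the first. Throughout I would fix a point $p\in\F(\{T_n\})$, which exists by hypothesis. Since $\sum_{k=1}^{n}\beta_n^k=1$ and each $T_k$ fixes $p$, the convex combination $V_n$ fixes $p$, so each $V_n$ is nonexpansive and $\F(\{T_n\})\subset\F(\{V_n\})$. Given a bounded sequence $\{x_n\}$ in $C$ with $x_n-V_nx_n\to 0$, the sequences $\{x_n-p\}$, $\{V_nx_n-p\}$, and $\{T_kx_n-p\}$ (for each $k\in\N$) are bounded, using $\norm{T_kx_n-p}\le\norm{x_n-p}$ and $\norm{V_nx_n-p}\le\norm{x_n-V_nx_n}+\norm{x_n-p}$; moreover, from $\bigl|\norm{x_n-p}-\norm{V_nx_n-p}\bigr|\le\norm{x_n-V_nx_n}\to 0$ and boundedness one gets $\norm{x_n-p}^2-\norm{V_nx_n-p}^2\to 0$.

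Fix $j\in\N$. For $n>j$ (which suffices, since the desired limit is over $n\to\infty$), I would write $V_n=\beta_n^j T_j+(1-\beta_n^j)W_n^j$, where $W_n^j=(1-\beta_n^j)^{-1}\sum_{k=1,\,k\ne j}^{n}\beta_n^k T_k$; since $\sum_{k=1,\,k\ne j}^{n}\beta_n^k=1-\beta_n^j>0$, the map $W_n^j\colon C\to E$ is a well-defined convex combination of nonexpansive maps, hence nonexpansive and fixing $p$. Convexity of $\norm{\,\cdot\,-p}^2$ gives $\norm{V_nx_n-p}^2\le\beta_n^j\norm{T_jx_n-p}^2+(1-\beta_n^j)\norm{W_n^jx_n-p}^2$, while nonexpansivity gives $\beta_n^j\norm{T_jx_n-p}^2+(1-\beta_n^j)\norm{W_n^jx_n-p}^2\le\norm{x_n-p}^2$. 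Hence the nonnegative quantity $\beta_n^j\norm{T_jx_n-p}^2+(1-\beta_n^j)\norm{W_n^jx_n-p}^2-\norm{V_nx_n-p}^2$ is trapped between $0$ and $\norm{x_n-p}^2-\norm{V_nx_n-p}^2$, so it tends to $0$. Since $\liminf_n\beta_n^j\ge\inf\{\beta_n^j:n\ge j\}>0$ by hypothesis, Lemma~\ref{l:uc-ft}, applied to $\{T_jx_n-p\}$ and $\{W_n^jx_n-p\}$ with $\lambda_n=\beta_n^j$, yields $(1-\beta_n^j)\norm{T_jx_n-W_n^jx_n}\to 0$; consequently $\norm{V_nx_n-T_jx_n}=(1-\beta_n^j)\norm{W_n^jx_n-T_jx_n}\to 0$ and $\norm{x_n-T_jx_n}\le\norm{x_n-V_nx_n}+\norm{V_nx_n-T_jx_n}\to 0$.

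For the second conclusion I would invoke Lemma~\ref{l:bruck}: $T=\sum_{n=1}^{\infty}\lambda_n T_n$ is well-defined and nonexpansive with $\F(T)=\F(\{T_n\})$, so $\F(T)=\F(\{T_n\})\subset\F(\{V_n\})$ and both are nonempty. For the convergence requirement of the NST condition~(I), take $\{x_n\}$ bounded in $C$ with $x_n-V_nx_n\to 0$; by the first conclusion, $\norm{x_n-T_jx_n}\to 0$ for every $j\in\N$. Setting $y_j^n=x_n-T_jx_n$, we have $\sup\{\norm{y_j^n}:(j,n)\in\N\times\N\}\le 2\sup_m\norm{x_m-p}<\infty$, $\sum_{j=1}^{\infty}\lambda_j=1$, and $y_j^n\to 0$ as $n\to\infty$ for each $j$, so Lemma~\ref{l:absolutely} yields $x_n-Tx_n=\sum_{j=1}^{\infty}\lambda_j(x_n-T_jx_n)\to 0$. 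Thus $\{V_n\}$ satisfies the NST condition~(I) with $T$.

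I expect the main obstacle to be the squeeze in the second paragraph: the decomposition $V_n=\beta_n^j T_j+(1-\beta_n^j)W_n^j$ has to be arranged so that $W_n^j$ is genuinely nonexpansive and fixes $p$, and one must confirm that $\beta_n^j\norm{T_jx_n-p}^2+(1-\beta_n^j)\norm{W_n^jx_n-p}^2-\norm{V_nx_n-p}^2\to 0$ before Lemma~\ref{l:uc-ft} can be applied. The index bookkeeping (restricting to $n>j$ so $W_n^j$ is defined), the boundedness estimates, and the final appeal to Lemma~\ref{l:absolutely} are then routine.
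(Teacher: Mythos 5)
Your proof is correct and follows essentially the same route as the paper: the same decomposition $V_n=\beta_n^jT_j+(1-\beta_n^j)W_n^j$ (the paper's $U_n$), the same squeeze via convexity of the squared norm feeding into Lemma~\ref{l:uc-ft}, and the same appeal to Lemmas~\ref{l:bruck} and~\ref{l:absolutely} for the NST condition. Your explicit check that $\liminf_n\beta_n^j>0$ (needed for Lemma~\ref{l:uc-ft}) is a point the paper leaves implicit.
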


\begin{proof}
 Let $j \in \N$ and $z \in \F(\{T_n\})$ be fixed. 
 Set $N_n = \{i \in \N: 1 \leq i \leq n,\, i \ne j\}$ for $n \in \N$. 
 Let $U_n$ be a mapping of $C$ into $E$ defined by
 \[
 U_n = \dfrac{1}{1-\beta_n^j} \sum_{k \in N_n} \beta_n^k T_k
 \]
 for $n \in \N$ with $n > j$. 
 Then one can verify that $V_n = \beta_n^j T_j + (1- \beta_n^j) U_n$, 
 $U_n$ is nonexpansive, and $z = U_n z$ for all $n \in \N$
 with $n > j$. 
 Since $\norm{\,\cdot\,}^2$ is convex,
 both $T_j$ and $V_n$ are nonexpansive,
 $z = T_j z = V_n z$, $\{x_n\}$ is bounded,
 and $\norm{x_n -V_n x_n} \to 0$, it turns out that
 \begin{align*}
  0 &\leq
  \beta_n^j \norm{z-T_j x_n}^2 + (1- \beta_n^j) \norm{z-U_n x_n}^2 \\
  &\qquad - \norm{\beta_n^j (z-T_j x_n) + (1- \beta_n^j)(z-U_n x_n)}^2\\
  &\leq \norm{z-x_n}^2 - \norm{z-V_n x_n}^2\\
  &\leq 2\norm{z-x_n}(\norm{z-x_n} - \norm{z-V_n x_n})\\
  &\leq 2\norm{z-x_n}\norm{x_n -V_n x_n} \to 0
 \end{align*}
 as $n \to \infty$. 
 Hence Lemma~\ref{l:uc-ft} implies that
 \[
 0 \leq (1 - \beta_n^j) \norm{U_n x_n - T_j x_n}
 = (1 - \beta_n^j) \norm{z - T_j x_n - (z - U_n x_n)} \to 0
 \]
 as $n \to \infty$. 
 Since $V_n - T_j = (1- \beta_n^j)(U_n - T_j)$ and $x_n - V_n x_n \to 0$, 
 we deduce that
 \[
 \norm{x_n - T_jx_n} \leq
 \norm{x_n -V_nx_n} + (1- \beta_n^j) \norm{U_n x_n - T_jx_n} \to 0
 \]
 as $n\to \infty$. Therefore, 
 $\lim_{n\to \infty} \norm{x_n - T_j x_n} = 0$ for all $j \in \N$. 

 We next show that $\{V_n\}$ satisfies the NST condition (I) with $T$. 
 By Lemma~\ref{l:bruck}, we know that $\F(T) = \F(\{T_n\})$. 
 It is clear from the definition of $V_n$ that 
 $\F(\{T_n\}) \subset \F(V_n)$ for all $n \in \N$.
 Thus $\F(\{T_n\}) \subset \F(\{V_n\})$,
 and hence $\F(\{ V_n \})$ is nonempty and 
 $\F(T) \subset \F(\{V_n\})$. 
 Let $\{y_n\}$ be a bounded sequence in $C$ such that $y_n - V_n y_n \to
 0$. Since 
 $\lim_{n\to \infty} \norm{y_n - T_j y_n} = 0$ for all $j \in \N$
 from the earlier part of this proof, it follows from
 Lemma~\ref{l:absolutely} that 
 \[
 \norm{y_n - Ty_n} =
 \biggl\lVert \sum_{j=1}^\infty \lambda_j (y_n - T_j y_n) \biggr\rVert \to 0
 \]
 as $n \to \infty$. This completes the proof. 
\end{proof}

\begin{lemma}\label{l:NST:cc}
 Let $E$ be a Banach space, 
 $C$ a nonempty subset of $E$, 
 $\{V_n\}$ a sequence of mappings of $C$ into $E$,
 $\{ \gamma_n\}$ a sequence in $\R$, 
 and $\{S_n\}$ a sequence of mappings of $C$ into $E$
 defined by $S_n = \gamma_n I + (1- \gamma_n)V_n$ for $n \in \N$,
 where $I$ is the identity mapping on $C$. 
 Suppose that $\F(\{V_n\})$ is nonempty and $\sup_n \gamma_n < 1$. 
 If $\{V_n\}$ satisfies the NST condition (I) with a mapping
 $T\colon C \to E$, then so does $\{ S_n \}$. 
\end{lemma}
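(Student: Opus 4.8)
The plan is to verify directly the three requirements in the definition of the NST condition (I) with $T$ for the sequence $\{S_n\}$, transferring each of them from $\{V_n\}$ by exploiting that the number $c := 1 - \sup_n \gamma_n$ is strictly positive, so that $1 - \gamma_n \ge c > 0$ for every $n \in \N$.

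First I would observe that $S_n$ and $V_n$ share the same fixed point set. Indeed, for $z \in C$ the equation $S_n z = z$ is equivalent to $(1-\gamma_n)(V_n z - z) = 0$, and since $1 - \gamma_n > 0$ this holds precisely when $V_n z = z$. Hence $\F(S_n) = \F(V_n)$ for all $n \in \N$, so $\F(\{S_n\}) = \F(\{V_n\})$, which is nonempty by hypothesis; moreover, $\F(T) \subset \F(\{V_n\}) = \F(\{S_n\})$ because $\{V_n\}$ satisfies the NST condition (I) with $T$.

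Next I would handle the asymptotic condition. Let $\{x_n\}$ be a bounded sequence in $C$ with $x_n - S_n x_n \to 0$. From the definition of $S_n$ we have $x_n - S_n x_n = (1-\gamma_n)(x_n - V_n x_n)$, and the bound $1 - \gamma_n \ge c > 0$ then yields $\norm{x_n - V_n x_n} \le c^{-1}\norm{x_n - S_n x_n} \to 0$. Since $\{V_n\}$ satisfies the NST condition (I) with $T$ and $\{x_n\}$ is bounded, it follows that $x_n - Tx_n \to 0$, which is the remaining requirement, and the proof is complete.

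There is no substantial obstacle here: the only point requiring care is that the hypothesis is $\sup_n \gamma_n < 1$ rather than merely $\gamma_n < 1$ for each $n$, since it is precisely this uniformity that keeps $1 - \gamma_n$ bounded away from zero and thus allows one to pass from $(1-\gamma_n)(x_n - V_n x_n) \to 0$ to $x_n - V_n x_n \to 0$.
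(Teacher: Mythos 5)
Your proof is correct and follows essentially the same route as the paper: establish $\F(S_n)=\F(V_n)$ from $1-\gamma_n>0$, and use the uniform bound $1-\gamma_n\geq 1-\sup_n\gamma_n>0$ to pass from $x_n-S_nx_n\to 0$ to $x_n-V_nx_n\to 0$ before invoking the NST condition for $\{V_n\}$. No issues.
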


\begin{proof}
 It is easy to check that $\F(S_n) = \F(V_n)$ for every $n \in \N$.
 Thus $\F(\{S_n\}) = \F(\{V_n\})$.
 Since $\{V_n\}$ satisfies the NST condition (I) with $T$,
 we know that $\F(\{V_n\})$ is nonempty and $\F(T) \subset \F(\{V_n\})$.
 Therefore, $\F(\{S_n\})$ is nonempty and $\F(T) \subset \F(\{S_n\})$.
 Let $\{x_n\}$ be a bounded sequence in $C$ such that $x_n - S_n x_n \to
 0$. Taking into account $I - S_n = (1-\gamma_n)(I - V_n)$
 and $\sup_n \gamma_n <1$, we deduce that 
 \[
  0 \leq (1- \sup\nolimits_{n} \gamma_n) \norm{x_n - V_n x_n}
 \leq (1- \gamma_n) \norm{x_n - V_n x_n}
 = \norm{x_n - S_n x_n} \to 0
 \]
 as $n \to \infty$. Thus $x_n - V_n x_n \to 0$. 
 Since $\{V_n\}$ satisfies the NST condition (I) with $T$,
 it follows that $x_n - Tx_n \to 0$. 
 As a result, $\{ S_n \}$ satisfies the NST condition (I) with $T$. 
\end{proof}

Using Corollary~\ref{c:main} and lemmas above, we can prove
Theorem~\ref{t:CFPP}.

\begin{proof}[Proof of Theorem~\ref{t:CFPP}]
 Let $T$ be a mapping defined by $T = \sum_{n=1}^\infty T_n/2^n$.
 Then Lemma~\ref{l:bruck} shows that $T$ is nonexpansive and $\F(T) =
 \F(\{T_n\})$.
 Moreover, it is clear that $T$ is a self-mapping of $C$. 

 Let $S_n$ and $V_n$ be mappings defined by
 $S_n = \gamma_n I + (1-\gamma_n) \sum_{k=1}^n \beta_n^k T_k$
 and $V_n = \sum_{k=1}^n \beta_n^k T_k$
 for $n \in \N$, where $I$ is the identity mapping on $C$.
 Then it is obvious that each $S_n$ is a self-mapping of $C$
 and~\eqref{e:x_n} holds for all $n \in \N$.
 Since each $T_k$ is nonexpansive and $\sum_{k=1}^n \beta_n^k = 1$,
 it follows that $V_n$ is nonexpansive for every $n \in \N$.
 Thus Example~\ref{ex:I+T_n:SNS} implies that
 $\{S_n\}$ is a strongly nonexpansive sequence. 
 On the other hand, it follows from Lemma~\ref{l:check-NST} that
 $\{V_n\}$ satisfies the NST condition (I) with $T$.
 Thus Lemma~\ref{l:NST:cc} implies that
 $\{S_n\}$ satisfies the NST condition (I) with $T$.
 Consequently, Corollary~\ref{c:main} implies that $x_n \to Qu$. 
\end{proof}

\subsection{Zeros of accretive operators}

We next consider the problem of finding a zero of an accretive operator in a
Banach space and prove a convergence theorem for the problem. 

Let $A$ be a set-valued mapping of $E$ into $E$. 
The domain of $A$ is denoted by $\domain(A)$, 
the range of $A$  by $\range (A)$, 
and the set of zeros of $A$ by $A^{-1}0$, that is, 
$\domain(A) = \{x \in E: Ax \ne \emptyset\}$,
$\range(A) = \bigcup \{Ax\colon x\in \domain (A)\}$,
and $A^{-1}0 = \{x \in \domain(A): 0 \in Ax \}$. 
We say that $A$ is an \emph{accretive} operator on $E$ if for
$x,y \in \domain (A)$, $u \in Ax$, and $v \in Ay$ there exists
$j \in J(x-y)$ such that $\ip{u - v}{j} \geq 0$. 

Let $A$ be an accretive operator on $E$, 
$I$ the identity mapping on $E$, and $\lambda$ a positive real number.
It is known that $(I+ \lambda A)^{-1}$ is a single-valued
mapping of $\range(I+ \lambda A)$ onto $\domain (A)$.
The mapping $(I+ \lambda A)^{-1}$ is called the \emph{resolvent} of $A$ and
is denoted by $J_\lambda$.
It is also known that $\F(J_\lambda) = A^{-1}0$ and 
$A^{-1}0$ is a sunny nonexpansive retract of $E$
under the assumptions of Lemma~\ref{l:takahashi-ueda};
see \cite{MR1864294}. 

Using Corollary~\ref{c:main}, we obtain the following theorem;
see~\cites{MR1788273, MR1758838, MR1802240, MR2218890, MR2338104,
MR2468414, MR2680036, MR2719910} for related results. 

\begin{theorem}[\cite{MR3666446}*{Theorem 3.1}]
 Let $E$, $C$, $\{\alpha_n\}$, and $u$ be the same as in
 Corollary~\ref{c:main},
 $A$ an accretive operator on $E$, 
 and $\{\lambda_n\}$ a sequence of positive real numbers. 
 Suppose that $A^{-1}0$ is nonempty,
 $\overline{\domain(A)} \subset C \subset 
 \range(I+\lambda A)$ for all $\lambda > 0$, 
 and $\inf_n \lambda_n > 0$, where
 $\overline{\domain (A)}$ is the closure of $\domain(A)$ and 
 $I$ is the identity mapping on $E$. 
 Let $\{x_n\}$ be a sequence defined by $x_1 \in C$ and 
 \[
 x_{n+1}= \alpha_n u + (1-\alpha_n) J_{\lambda_n} x_n 
 \]
 for $n\in\N$, where $J_{\lambda_n} = (I+ \lambda_n A)^{-1}$. 
 Then $\{x_n\}$ converges strongly to $Qu$, 
 where $Q$ is the sunny nonexpansive retraction of $C$ onto
 $A^{-1}0$.
\end{theorem}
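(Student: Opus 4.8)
The plan is to reduce the theorem to Corollary~\ref{c:main} by verifying its hypotheses for the sequence $\{S_n\}$ defined by $S_n = J_{\lambda_n}$ for $n \in \N$. First I would observe that the assumption $\overline{\domain(A)} \subset C \subset \range(I+\lambda A)$ for all $\lambda > 0$ guarantees that each resolvent $J_{\lambda_n} = (I+\lambda_n A)^{-1}$ is a well-defined single-valued nonexpansive mapping whose domain contains $C$ and whose range lies in $\overline{\domain(A)} \subset C$, so that $J_{\lambda_n}$ is a nonexpansive self-mapping of $C$. Hence the recursion $x_{n+1} = \alpha_n u + (1-\alpha_n) S_n x_n$ is exactly of the form~\eqref{e:x_n}, and $E$, $C$, $\{\alpha_n\}$, $u$ satisfy all the standing assumptions of Corollary~\ref{c:main}. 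It also needs to be recorded that $\F(\{S_n\}) = \bigcap_n \F(J_{\lambda_n}) = \bigcap_n A^{-1}0 = A^{-1}0$, which is nonempty by hypothesis.

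Next I would identify a suitable nonexpansive mapping $T$ with which $\{S_n\}$ satisfies the NST condition~(I). The natural choice is the resolvent $T = J_{\lambda}$ for some fixed $\lambda$ with $0 < \lambda \le \inf_n \lambda_n$ (this infimum being positive by hypothesis); then $T$ is a nonexpansive self-mapping of $C$ with $\F(T) = A^{-1}0 = \F(\{S_n\})$. The two things to check are: (i) $\{S_n\}$ is a strongly nonexpansive sequence, and (ii) $\{S_n\}$ satisfies the NST condition~(I) with $T$, i.e. $x_n - Tx_n \to 0$ whenever $\{x_n\}$ is bounded in $C$ and $x_n - J_{\lambda_n} x_n \to 0$. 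For (i) one uses the resolvent identity, which expresses $J_{\lambda_n}$ in terms of $J_\mu$ for a fixed small $\mu$ as a convex combination $J_{\lambda_n} x = J_\mu\bigl( (\mu/\lambda_n) x + (1 - \mu/\lambda_n) J_{\lambda_n} x \bigr)$; since $\mu/\lambda_n$ stays bounded away from $1$ (indeed $\mu/\lambda_n \le \mu/\inf_k\lambda_k < 1$ for $\mu$ chosen small enough) and $\liminf (\mu/\lambda_n) > 0$ when additionally $\sup_n \lambda_n < \infty$ — or one argues directly without that — this exhibits each $S_n$ as $\gamma_n I + (1-\gamma_n) V_n$ in the spirit of Example~\ref{ex:I+T_n:SNS}, or one invokes a known lemma on resolvents in uniformly convex spaces; Lemma~\ref{l:uc-ft} is the workhorse here. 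For (ii), from $x_n - J_{\lambda_n} x_n \to 0$ and the resolvent identity one first shows $x_n - J_\mu x_n \to 0$ and then, since $T = J_\lambda$ and $\lambda$ and $\mu$ are both fixed, $x_n - J_\lambda x_n \to 0$ follows from a standard comparison of resolvents (again via a resolvent-identity convex-combination argument and Lemma~\ref{l:uc-ft}, or via the firm nonexpansiveness of resolvents in the uniformly convex setting).

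With (i) and (ii) established, Corollary~\ref{c:main} applies verbatim and yields that $\{x_n\}$ converges strongly to $Qu$, where $Q$ is the sunny nonexpansive retraction of $C$ onto $\F(T) = A^{-1}0$; this $Q$ exists by Lemma~\ref{l:takahashi-ueda} since $E$ is reflexive (being uniformly convex), has the fixed point property for nonexpansive mappings, and has uniformly G\^{a}teaux differentiable norm. That delivers exactly the stated conclusion.

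The main obstacle I anticipate is the clean handling of the resolvent identity to simultaneously obtain both the strong-nonexpansiveness of $\{S_n\}$ and the NST condition with a single fixed resolvent $T = J_\lambda$, particularly keeping track of whether $\sup_n \lambda_n$ needs to be finite: if it is not assumed finite, one must be slightly more careful, since $\mu/\lambda_n$ may tend to $0$, and then verifying the NST condition requires passing from $x_n - J_{\lambda_n}x_n \to 0$ to $x_n - J_\mu x_n \to 0$ using the estimate $\norm{x_n - J_\mu x_n} \le \norm{x_n - J_{\lambda_n}x_n} + \norm{J_{\lambda_n}x_n - J_\mu x_n}$ together with a bound on $\norm{J_{\lambda_n}x_n - J_\mu x_n}$ coming from the resolvent identity and boundedness of $\{x_n\}$. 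This bookkeeping — and citing the precise known lemma about resolvents in uniformly convex Banach spaces that packages it — is where the real work of the proof lies; everything else is a direct appeal to Corollary~\ref{c:main} and Lemma~\ref{l:takahashi-ueda}.
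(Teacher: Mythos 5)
Your proposal follows essentially the same route as the paper: reduce to Corollary~\ref{c:main} by checking that $S_n = J_{\lambda_n}$ is a sequence of nonexpansive self-mappings of $C$ with $\F(\{S_n\}) = A^{-1}0$, that $\{J_{\lambda_n}\}$ is a strongly nonexpansive sequence, and that it satisfies the NST condition (I) with a fixed resolvent (the paper takes $T = J_1$, you take $T = J_\lambda$ with $\lambda \le \inf_n \lambda_n$ --- an immaterial difference since both have fixed point set $A^{-1}0$). The two facts whose proofs you sketch via the resolvent identity are exactly the ones the paper disposes of by citation (\cite{MR2581778}*{Lemma~2.5} and \cite{MR2314663}*{Lemma~3.5}), so your argument is correct and matches the paper's.
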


\begin{proof}
 Taking into account $\F(J_{\lambda_n}) = A^{-1}0$,
 we see that $\F(\{J_{\lambda_n}\}) = A^{-1}0 \ne \emptyset$. 
 We know that $\{ J_{\lambda_n}\}$ is a strongly nonexpansive sequence;
 see~\cite{MR2581778}*{Lemma~2.5}.
 We also know that $\{J_{\lambda_n}\}$ satisfies the NST condition (I)
 with $J_1$; see~\cite{MR2314663}*{Lemma 3.5}.
 Therefore Corollary~\ref{c:main} implies the conclusion.
\end{proof}

\subsection{Viscosity approximation method}

In the rest of this section, we deal with the viscosity approximation method
for strongly nonexpansive sequences. 
The viscosity approximation method for a single nonexpansive mapping was
originally proposed by Moudafi~\cite{MR1738332}; see
also~\cites{MR2086546, MR3185784, MR3213161, NMJ2016, MR2273529, MR2468414}
and references therein. 

Let $C$ be a subset of a Banach space $E$, $F$ a nonempty subset of $C$, 
$f\colon C\to C$ a mapping, and $\theta$ a real number in $[0,1)$.
Recall that $f$ is said to be a 
\emph{$\theta$-contraction} if
\begin{equation}\label{e:contraction}
\norm{f(x) - f(y)} \leq \theta \norm{x-y} 
\end{equation}
for all $x,y\in C$; $f$ is said to be a
\emph{$\theta$-contraction with respect to $F$}~\cites{MR3185784,
NMJ2016} if \eqref{e:contraction} holds for all $x\in C$ and $y\in F$. 

Applying Theorem~\ref{t:main}, we obtain the following theorem: 

\begin{theorem}\label{t:viscosity}
 Let $E$, $C$, $\{S_n\}$, $T$, $Q$, and $\{\alpha_n\}$ be the same as in
 Theorem~\ref{t:main},
 $\{f_n\}$ a sequence of self-mappings of $C$, 
 and $\{y_n\}$ a sequence defined by $y_1 \in C$ and 
 \begin{equation}\label{e:y_n}
  y_{n+1} = \alpha_n f_n (y_n) + (1-\alpha_n) S_n y_n    
 \end{equation}
 for $n\in\N$.
 Suppose that each $f_n$ is a $\theta$-contraction with respect to
 $\F(\{S_n\})$
 and there exists $u \in C$ such that $f_n (Qu)\to u$,
 where $\theta \in [0,1)$. 
 Then $\{y_n\}$ converges strongly to $Qu$. 
\end{theorem}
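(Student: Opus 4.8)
The plan is to deduce the theorem directly from Theorem~\ref{t:main}, comparing $\{y_n\}$ with the sequence $\{x_n\}$ generated by~\eqref{e:x_n}, so that none of the eventually-increasing-function machinery need be repeated. Write $w = Qu$. By the definition of the NST condition~(I) together with the remark following it, $\F(T) = \F(\{S_n\})$; in particular $w \in \F(\{S_n\})$, so the hypothesis that each $f_n$ is a $\theta$-contraction with respect to $\F(\{S_n\})$ yields $\norm{f_n(x) - f_n(w)} \le \theta \norm{x - w}$ for every $x \in C$ and $n \in \N$.

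First I would introduce the sequence $\{x_n\}$ defined by $x_1 = y_1$ and $x_{n+1} = \alpha_n u + (1-\alpha_n) S_n x_n$ for $n \in \N$. Since all the hypotheses of Theorem~\ref{t:main} are in force and $u \in C$, that theorem gives $x_n \to w$. Next I would estimate $\norm{y_{n+1} - x_{n+1}}$. Subtracting~\eqref{e:x_n} from~\eqref{e:y_n} and using that $S_n$ is nonexpansive gives
\[
 \norm{y_{n+1} - x_{n+1}} \le \alpha_n \norm{f_n(y_n) - u} + (1-\alpha_n) \norm{y_n - x_n},
\]
while the triangle inequality and the contraction estimate above give
\[
 \norm{f_n(y_n) - u} \le \norm{f_n(y_n) - f_n(w)} + \norm{f_n(w) - u} \le \theta \norm{y_n - x_n} + \varepsilon_n ,
\]
where $\varepsilon_n = \theta \norm{x_n - w} + \norm{f_n(Qu) - u}$; note $\varepsilon_n \to 0$ because $x_n \to w$ and $f_n(Qu) \to u$ by hypothesis. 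Combining the two displays yields
\[
 \norm{y_{n+1} - x_{n+1}} \le \bigl(1 - \alpha_n(1-\theta)\bigr) \norm{y_n - x_n} + \alpha_n \varepsilon_n
\]
for all $n \in \N$.

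Finally I would apply Lemma~\ref{lm:seq} with $\xi_n = \norm{y_n - x_n}$, with the role of its parameter sequence played by $\alpha_n(1-\theta)$, and with $\gamma_n = \varepsilon_n/(1-\theta)$: since $\theta \in [0,1)$ we have $\alpha_n(1-\theta) \in (0,1]$, $\sum_{n=1}^\infty \alpha_n(1-\theta) = (1-\theta)\sum_{n=1}^\infty \alpha_n = \infty$, and $\limsup_{n\to\infty} \gamma_n = 0$, and the last display is exactly the recursion the lemma requires. Hence $\norm{y_n - x_n} \to 0$, and since $x_n \to Qu$ we conclude $y_n \to Qu$. The argument is almost entirely routine; the one point that is genuinely used is $\theta < 1$, since that is what makes $1-\theta > 0$ and thereby keeps $\sum \alpha_n(1-\theta)$ divergent so that Lemma~\ref{lm:seq} applies. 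I do not foresee any real obstacle beyond recognizing that the right move is to piggyback on Theorem~\ref{t:main} rather than re-run its proof with $f_n(y_n)$ in place of $u$; a minor bonus is that boundedness of $\{y_n\}$ then comes for free, so it need not be established separately.
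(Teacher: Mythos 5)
Your proposal is correct and is essentially the paper's own proof: the paper likewise invokes Theorem~\ref{t:main} to get $x_n \to Qu$ and then establishes $\norm{x_n - y_n} \to 0$ via the same recursion $\norm{x_{n+1}-y_{n+1}} \le (1-(1-\theta)\alpha_n)\norm{x_n-y_n} + \alpha_n\varepsilon_n$ and Lemma~\ref{lm:seq}, merely packaging that comparison step as a separate Lemma~\ref{l:viscosity}. Your observation that $Qu \in \F(\{S_n\})$ (via $\F(T)=\F(\{S_n\})$ from the NST condition) is exactly the point needed to apply the contraction hypothesis.
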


In order to prove Theorem~\ref{t:viscosity}, we need the following
lemma.
The proof is based on the technique developed in~\cite{MR2273529};
see also \cite{MR3185784}. 

\begin{lemma}\label{l:viscosity}
 Let $E$ be a Banach space, $C$ a nonempty closed convex subset of $E$,
 $F$ a nonempty sunny nonexpansive retract of $C$, 
 $Q$ a sunny nonexpansive retraction of $C$ onto $F$,
 $\{S_n\}$ a sequence of nonexpansive self-mappings of $C$,
 $\{f_n \}$ a sequence of self-mappings of $C$, 
 $u$ a point in $C$, 
 $\{\alpha_n\}$ a sequence in $[0,1]$, $\{x_n\}$ a sequence in $C$ defined
 by $x_1 \in C$ and \eqref{e:x_n} for $n \in \N$, and 
 $\{y_n\}$ a sequence in $C$ defined by $y_1 \in C$ and \eqref{e:y_n} for
 $n \in \N$. 
 Suppose that $\sum_{n=1}^\infty \alpha_n = \infty$,
 each $f_n$ is a $\theta$-contraction with respect to $F$, 
 $f_n (Qu) \to u$, and $x_n \to Qu$, where $\theta \in [0,1)$. 
 Then $\{y_n\}$ converges strongly to $Qu$. 
\end{lemma}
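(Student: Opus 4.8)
The plan is to compare the viscosity iteration $\{y_n\}$ with the simpler Halpern-type iteration $\{x_n\}$ governed by the same data but with the fixed anchor $u$, and to show that $\norm{y_n - x_n}\to 0$; since $x_n\to Qu$ by hypothesis, this yields $y_n\to Qu$. The natural quantity to control is $\xi_n = \norm{y_n - x_n}$, and the goal is to produce a recursion of the form $\xi_{n+1}\le (1-\alpha_n(1-\theta))\xi_n + \alpha_n(1-\theta)\gamma_n$ with $\limsup_n\gamma_n\le 0$, so that Lemma~\ref{lm:seq} applies (using $\sum\alpha_n=\infty$, which forces $\sum\alpha_n(1-\theta)=\infty$).

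First I would establish that $\{y_n\}$ is bounded. Pick any $p\in F$ (here $p=Qu$ is the natural choice). Using \eqref{e:y_n}, nonexpansiveness of $S_n$, $p\in\F(\{S_n\})$ (which must be checked --- here we should note that the hypotheses guarantee $Qu\in F$ and presumably the intended application has $F=\F(\{S_n\})$, so I would carry the argument with a point $p\in F$ that is fixed by every $S_n$), and the $\theta$-contraction-with-respect-to-$F$ property of $f_n$, estimate
\[
 \norm{y_{n+1}-p}\le \alpha_n\norm{f_n(y_n)-p} + (1-\alpha_n)\norm{y_n-p}
 \le \alpha_n\theta\norm{y_n-p} + \alpha_n\norm{f_n(p)-p} + (1-\alpha_n)\norm{y_n-p}.
\]
Since $f_n(p)=f_n(Qu)\to u$, the sequence $\{\norm{f_n(p)-p}\}$ is bounded, and a routine induction shows $\{y_n\}$ is bounded; consequently $\{f_n(y_n)\}$ is bounded as well.

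Next I would derive the key recursion. Subtracting \eqref{e:x_n} from \eqref{e:y_n} and using convexity of the norm,
\[
 \xi_{n+1} = \norm{y_{n+1}-x_{n+1}}
 \le \alpha_n\norm{f_n(y_n)-u} + (1-\alpha_n)\norm{S_ny_n - S_nx_n}
 \le \alpha_n\norm{f_n(y_n)-u} + (1-\alpha_n)\xi_n.
\]
Now split $\norm{f_n(y_n)-u}\le \norm{f_n(y_n)-f_n(Qu)} + \norm{f_n(Qu)-u}$. The first term is bounded by $\theta\norm{y_n-Qu}$ using the $\theta$-contraction property with respect to $F$ (legitimate since $Qu\in F$), and $\norm{y_n-Qu}\le \xi_n + \norm{x_n-Qu}$. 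Writing $\delta_n = \norm{f_n(Qu)-u}\to 0$ and $\epsilon_n=\norm{x_n-Qu}\to 0$, we obtain
\[
 \xi_{n+1}\le \bigl(1-\alpha_n(1-\theta)\bigr)\xi_n + \alpha_n\bigl(\theta\epsilon_n + \delta_n\bigr)
 = \bigl(1-\alpha_n(1-\theta)\bigr)\xi_n + \alpha_n(1-\theta)\cdot\frac{\theta\epsilon_n+\delta_n}{1-\theta}.
\]
Setting $\gamma_n = (\theta\epsilon_n+\delta_n)/(1-\theta)$, we have $\gamma_n\to 0$, hence $\limsup_n\gamma_n\le 0$, and $\sum_n\alpha_n(1-\theta)=\infty$. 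Lemma~\ref{lm:seq} then gives $\xi_n\to 0$, and therefore $\norm{y_n-Qu}\le\xi_n+\epsilon_n\to 0$, i.e. $y_n\to Qu$.

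The only genuine subtlety --- and the step I would be most careful about --- is the bootstrapping in the boundedness argument and the identification of the point $p$ fixed by all $S_n$: the lemma as stated only assumes $F$ is a sunny nonexpansive retract and each $S_n$ is nonexpansive, so to run the estimates one needs $Qu$ (or some point of $F$) to lie in $\F(\{S_n\})$; in the intended use via Theorem~\ref{t:viscosity} this holds because there $F=\F(\{S_n\})$ and $Q$ retracts onto it. Apart from that, every step is a routine application of the triangle inequality, convexity of the norm, and Lemma~\ref{lm:seq}; no differentiability of the norm or fixed point property is needed here, which is why this lemma is stated for a general Banach space.
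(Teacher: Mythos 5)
Your key recursion is exactly the paper's proof: the same telescoping estimate $\norm{y_{n+1}-x_{n+1}}\le\bigl(1-(1-\theta)\alpha_n\bigr)\norm{y_n-x_n}+(1-\theta)\alpha_n\,\frac{\theta\norm{Qu-x_n}+\norm{u-f_n(Qu)}}{1-\theta}$ followed by Lemma~\ref{lm:seq}, so the proposal is correct and takes essentially the same route. The one thing to delete is your preliminary boundedness argument and the accompanying worry about needing $Qu\in\F(\{S_n\})$: the recursion you derive uses only the nonexpansiveness of $S_n$ (applied to the \emph{difference} $S_ny_n-S_nx_n$), the contraction property at the point $Qu\in F$, and the hypotheses $f_n(Qu)\to u$ and $x_n\to Qu$, so no point of $F$ need be a common fixed point of the $S_n$ and the lemma is provable exactly as stated.
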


\begin{proof}
 Since $Qu \in F$ and $f_n$ is a $\theta$-contraction
 with respect to $F$, we have
 \begin{align*}
  \norm{u - f_n (y_n)}
  &\leq \norm{u - f_n (Qu)}
  + \norm{f_n (Qu) - f_n (y_n)}\\
  &\leq \norm{u - f_n (Qu)} + \theta \norm{Qu - y_n}\\
  &\leq \norm{u - f_n (Qu)}
  + \theta \norm{Qu - x_n} + \theta \norm{x_n - y_n}. 
 \end{align*}
 Thus we deduce from the nonexpansiveness of $S_n$ that
 \begin{align*}
  \norm{x_{n+1} - y_{n+1}}
  &\leq \alpha_n \norm{u - f_n (y_n)} + (1-\alpha_n) \norm{x_n - y_n}\\
  &\leq \bigl( 1 - (1-\theta)\alpha_n \bigr) \norm{x_n - y_n} \\
  &\qquad + (1-\theta)\alpha_n \left(
  \dfrac{\theta \norm{Qu - x_n} + \norm{u - f_n (Qu)}}
  {1-\theta} 
  \right)
 \end{align*}
 for all $n \in \N$. 
 By assumption, we know that
 $\sum_{n=1}^\infty (1-\theta)\alpha_n = \infty$, 
 $\norm{Qu - x_n}\to 0$, and $\norm{u - f_n (Qu)} \to0$.
 Thus Lemma~\ref{lm:seq} implies that $\norm{x_n - y_n} \to 0$,
 and hence $y_n \to Qu$.
\end{proof}

\begin{proof}[Proof of Theorem~\ref{t:viscosity}]
 Let $\{x_n\}$ be a sequence defined by $x_1 \in C$ and~\eqref{e:x_n}
 for $n \in \N$. Then it follows from Theorem~\ref{t:main} that $\{x_n\}$
 converges strongly to $Qu$.
 Thus Lemma~\ref{l:viscosity} implies the conclusion. 
\end{proof}

Using Theorem~\ref{t:viscosity}, we directly obtain the following two
corollaries:

\begin{corollary}
 Let $E$, $C$, $\{S_n\}$, $T$, $Q$, and $\{\alpha_n\}$ be the same as in
 Theorem~\ref{t:main}, $\{u_n \}$ a sequence in $C$, 
 and $\{y_n\}$ a sequence defined by $y_1 \in C$ and 
 \[
 y_{n+1} = \alpha_n u_n + (1-\alpha_n) S_n y_n    
 \]
 for $n\in\N$.
 Suppose that $u_n \to u$. Then $\{y_n\}$ converges strongly to $Qu$. 
\end{corollary}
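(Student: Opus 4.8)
The plan is to recognize this corollary as a direct specialization of Theorem~\ref{t:viscosity}, obtained by taking the sequence of self-mappings $\{f_n\}$ to be the constant maps $f_n(x) = u_n$ for all $x \in C$. First I would verify that each such $f_n$ is a $\theta$-contraction with respect to $\F(\{S_n\})$ for, say, $\theta = 0$: indeed, $\norm{f_n(x) - f_n(y)} = \norm{u_n - u_n} = 0 = 0 \cdot \norm{x-y}$ holds for all $x, y \in C$, so in particular for all $x \in C$ and $y \in \F(\{S_n\})$. Since $0 \in [0,1)$, the contraction hypothesis of Theorem~\ref{t:viscosity} is met.

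Next I would check the remaining hypothesis of Theorem~\ref{t:viscosity}, namely that there exists $u \in C$ with $f_n(Qu) \to u$. The natural candidate is the limit $u$ of the given sequence $\{u_n\}$. One must first confirm $u \in C$: since $C$ is closed and convex (hence closed) and $u_n \in C$ with $u_n \to u$, we get $u \in C$. Then $f_n(Qu) = u_n$ for every $n$, and by hypothesis $u_n \to u$, so indeed $f_n(Qu) \to u$. With all hypotheses of Theorem~\ref{t:viscosity} verified and the recursion $y_{n+1} = \alpha_n u_n + (1-\alpha_n) S_n y_n = \alpha_n f_n(y_n) + (1-\alpha_n) S_n y_n$ matching~\eqref{e:y_n}, the conclusion that $\{y_n\}$ converges strongly to $Qu$ follows immediately.

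I do not anticipate a serious obstacle here; the only point requiring a moment's care is the observation that $u \in C$ (so that $Qu$ is defined), which is where closedness of $C$ enters. Everything else is bookkeeping: matching the iteration scheme and checking the two simple hypotheses.

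\begin{proof}
 Define $f_n \colon C \to C$ by $f_n(x) = u_n$ for all $x \in C$ and $n \in \N$.
 Since $\norm{f_n(x) - f_n(y)} = 0$ for all $x, y \in C$, each $f_n$ is a
 $\theta$-contraction with respect to $\F(\{S_n\})$ with $\theta = 0 \in [0,1)$.
 Because $C$ is closed and $u_n \in C$ with $u_n \to u$, we have $u \in C$.
 Moreover, $f_n(Qu) = u_n \to u$ by assumption.
 Finally, $y_{n+1} = \alpha_n u_n + (1-\alpha_n) S_n y_n = \alpha_n f_n(y_n)
 + (1-\alpha_n) S_n y_n$ for all $n \in \N$, so $\{y_n\}$ is the sequence
 defined by~\eqref{e:y_n}.
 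Therefore Theorem~\ref{t:viscosity} implies that $\{y_n\}$ converges strongly
 to $Qu$.
\end{proof}
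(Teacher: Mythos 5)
Your proof is correct and follows exactly the same route as the paper: take $f_n$ to be the constant map $x \mapsto u_n$, observe it is a $0$-contraction with $f_n(Qu) = u_n \to u$, and apply Theorem~\ref{t:viscosity}. The extra remark that $u \in C$ by closedness of $C$ is a small detail the paper leaves implicit, and it is fine.
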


\begin{proof}
 Let $f_n \colon C\to C$ be a mapping defined by $f_n (x) = u_n$ for all
 $x \in C$ and $n \in \N$.
 Then it is clear that each $f_n$ is a $0$-contraction with respect to
 $C$ and $f_n (Qu) = u_n \to u$.
 Therefore Theorem~\ref{t:viscosity} implies the conclusion. 
\end{proof}

\begin{corollary}
 Let $E$, $C$, $\{S_n\}$, $T$, $Q$, and $\{\alpha_n\}$ be the same as in
 Theorem~\ref{t:main},
 $\{f_n\}$ a sequence of self-mappings of $C$, 
 and $\{y_n\}$ a sequence defined by $y_1 \in C$ and 
 \eqref{e:y_n} for $n\in\N$.
 Suppose that each $f_n$ is a $\theta$-contraction with respect to
 $\F(\{S_n\})$ 
 and $\{f_n (z): n\in \N\}$ is a singleton for all $z\in \F(\{S_n\})$,
 where $\theta \in [0,1)$. 
 Then $\{y_n\}$ converges strongly to $w$, where $w$ is a unique fixed point
 of $Q\circ f_1$. 
\end{corollary}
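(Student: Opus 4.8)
The plan is to deduce this corollary from Theorem~\ref{t:viscosity} by manufacturing, out of the present hypotheses, a point $u \in C$ with $f_n(Qu)\to u$. First I would record the basic structural facts: since $\{S_n\}$ satisfies the NST condition (I) with $T$, we have $\F(T)=\F(\{S_n\})$ (the inclusion $\F(T)\subset\F(\{S_n\})$ is part of the definition and the reverse inclusion is the Remark following it), this common set is nonempty, and it is closed because $T$ is continuous and $C$ is closed; hence $\F(T)$ is a complete metric space in the induced metric and $Q$ is the sunny nonexpansive retraction of $C$ onto it.

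Next I would locate the candidate $u$ by analyzing $Q\circ f_1\colon C\to\F(T)$. For $z,z'\in\F(T)=\F(\{S_n\})$ we have, using nonexpansiveness of $Q$ and the hypothesis that $f_1$ is a $\theta$-contraction with respect to $\F(\{S_n\})$ (applicable since $z'\in\F(\{S_n\})$),
\[
\norm{Qf_1(z)-Qf_1(z')}\leq\norm{f_1(z)-f_1(z')}\leq\theta\norm{z-z'},
\]
so the restriction of $Q\circ f_1$ to $\F(T)$ is a $\theta$-contraction of the complete metric space $\F(T)$ into itself, which by the Banach contraction principle has a unique fixed point $w\in\F(T)$. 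Since $(Q\circ f_1)(C)\subseteq\F(T)$, every fixed point of $Q\circ f_1$ in $C$ already lies in $\F(T)$, so $w$ is the unique fixed point of $Q\circ f_1$ on all of $C$, which is exactly the point named in the statement.

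Finally I would set $u=f_1(w)$. Then $Qu=Qf_1(w)=w$, so in particular $Qu\in\F(T)=\F(\{S_n\})$; and since $\{f_n(z):n\in\N\}$ is a singleton for every $z\in\F(\{S_n\})$, applying this with $z=w$ gives $f_n(Qu)=f_n(w)=f_1(w)=u$ for all $n$, hence $f_n(Qu)\to u$. Now all hypotheses of Theorem~\ref{t:viscosity} hold ($\{f_n\}$ are $\theta$-contractions with respect to $\F(\{S_n\})$ by assumption, and $u\in C$ satisfies $f_n(Qu)\to u$), so that theorem yields $y_n\to Qu=w$, which is the claim. I do not expect a serious obstacle here beyond making the right choice of $u$; the only points needing a little care are that $Q\circ f_1$ need not be a contraction on all of $C$, only on $\F(T)$ where both arguments lie in $\F(\{S_n\})$, and that $\F(T)$ must be verified complete so that Banach's theorem applies.
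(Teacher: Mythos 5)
Your proposal is correct and follows essentially the same route as the paper's proof: restrict $Q\circ f_1$ to $\F(\{S_n\})$ to get the unique fixed point $w$, set $u=f_1(w)$, use the singleton hypothesis to get $f_n(Qu)=u$ for all $n$, and invoke Theorem~\ref{t:viscosity}. The extra care you take (completeness of $\F(T)$, the fact that the contraction property is only available on $\F(\{S_n\})$, and that any fixed point of $Q\circ f_1$ in $C$ must lie in $\F(T)$) fills in details the paper leaves implicit.
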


\begin{proof}
 Since $Q \circ f_1$ is a $\theta$-contraction on $\F(\{S_n\})$, 
 $Q \circ f_1$ has a unique fixed point $w \in \F(\{S_n\})$. 
 Set $u = f_1(w)$.
 By assumption, $\{f_n (w): n \in \N\} = \{ u \}$.
 Thus we see that
 $f_n (Qu) = f_n \bigl( Q \bigl(f_1 (w) \bigr) \bigr) = f_n (w)  = u$
 for all $n \in \N$, and hence $f_n (Qu)  \to u$. 
 Therefore Theorem~\ref{t:viscosity} implies that
 $y_n \to Qu = Q \bigl( f_1 (w) \bigr)= w$. 
\end{proof}

\begin{bibdiv}
\begin{biblist}

\bib{pNACA2011}{incollection}{
      author={Aoyama, Koji},
       title={Approximations to solutions of the variational inequality problem
  for inverse-strongly-monotone mappings},
        date={2013},
   booktitle={Nonlinear analysis and convex analysis -{I}-},
   publisher={Yokohama Publ., Yokohama},
       pages={1\ndash 9},
}

\bib{NMJ2016}{article}{
      author={Aoyama, Koji},
       title={Viscosity approximation method for quasinonexpansive mappings
  with contraction-like mappings},
        date={2016},
     journal={Nihonkai Math. J.},
      volume={27},
       pages={168\ndash 180},
}

\bib{MR2799767}{article}{
      author={Aoyama, Koji},
      author={Kimura, Yasunori},
       title={Strong convergence theorems for strongly nonexpansive sequences},
        date={2011},
        ISSN={0096-3003},
     journal={Appl. Math. Comput.},
      volume={217},
       pages={7537\ndash 7545},
         url={http://dx.doi.org/10.1016/j.amc.2011.01.092},
}

\bib{MR3185784}{article}{
      author={Aoyama, Koji},
      author={Kimura, Yasunori},
       title={Viscosity approximation methods with a sequence of contractions},
        date={2014},
        ISSN={0716-7776},
     journal={Cubo},
      volume={16},
       pages={9\ndash 20},
}

\bib{MR2960628}{article}{
      author={Aoyama, Koji},
      author={Kimura, Yasunori},
      author={Kohsaka, Fumiaki},
       title={Strong convergence theorems for strongly relatively nonexpansive
  sequences and applications},
        date={2012},
        ISSN={1906-9685},
     journal={J. Nonlinear Anal. Optim.},
      volume={3},
       pages={67\ndash 77},
}

\bib{MR2338104}{article}{
      author={Aoyama, Koji},
      author={Kimura, Yasunori},
      author={Takahashi, Wataru},
      author={Toyoda, Masashi},
       title={Approximation of common fixed points of a countable family of
  nonexpansive mappings in a {B}anach space},
        date={2007},
        ISSN={0362-546X},
     journal={Nonlinear Anal.},
      volume={67},
       pages={2350\ndash 2360},
         url={http://dx.doi.org/10.1016/j.na.2006.08.032},
}

\bib{MR2377867}{article}{
      author={Aoyama, Koji},
      author={Kimura, Yasunori},
      author={Takahashi, Wataru},
      author={Toyoda, Masashi},
       title={On a strongly nonexpansive sequence in {H}ilbert spaces},
        date={2007},
        ISSN={1345-4773},
     journal={J. Nonlinear Convex Anal.},
      volume={8},
       pages={471\ndash 489},
}

\bib{MR2581778}{incollection}{
      author={Aoyama, Koji},
      author={Kimura, Yasunori},
      author={Takahashi, Wataru},
      author={Toyoda, Masashi},
       title={Strongly nonexpansive sequences and their applications in
  {B}anach spaces},
        date={2008},
   booktitle={Fixed point theory and its applications},
   publisher={Yokohama Publ., Yokohama},
       pages={1\ndash 18},
}

\bib{MR3213161}{article}{
      author={Aoyama, Koji},
      author={Kohsaka, Fumiaki},
       title={Viscosity approximation process for a sequence of
  quasinonexpansive mappings},
        date={2014},
        ISSN={1687-1812},
     journal={Fixed Point Theory Appl.},
       pages={2014:17, 11},
         url={http://dx.doi.org/10.1186/1687-1812-2014-17},
}

\bib{MR3666446}{article}{
      author={Aoyama, Koji},
      author={Toyoda, Masashi},
       title={Approximation of zeros of accretive operators in a {B}anach
  space},
        date={2017},
        ISSN={0021-2172},
     journal={Israel J. Math.},
      volume={220},
       pages={803\ndash 816},
         url={https://doi.org/10.1007/s11856-017-1511-1},
}

\bib{MR2719910}{article}{
      author={Boikanyo, O.~A.},
      author={Moro{\c{s}}anu, G.},
       title={A proximal point algorithm converging strongly for general
  errors},
        date={2010},
        ISSN={1862-4472},
     journal={Optim. Lett.},
      volume={4},
       pages={635\ndash 641},
         url={http://dx.doi.org/10.1007/s11590-010-0176-z},
}

\bib{MR0470761}{article}{
      author={Bruck, Ronald~E.},
      author={Reich, Simeon},
       title={Nonexpansive projections and resolvents of accretive operators in
  {B}anach spaces},
        date={1977},
        ISSN={0362-1588},
     journal={Houston J. Math.},
      volume={3},
       pages={459\ndash 470},
}

\bib{MR0324491}{article}{
      author={Bruck, Ronald~E., Jr.},
       title={Properties of fixed-point sets of nonexpansive mappings in
  {B}anach spaces},
        date={1973},
        ISSN={0002-9947},
     journal={Trans. Amer. Math. Soc.},
      volume={179},
       pages={251\ndash 262},
}

\bib{MR1074005}{book}{
      author={Goebel, Kazimierz},
      author={Kirk, W.~A.},
       title={Topics in metric fixed point theory},
      series={Cambridge Studies in Advanced Mathematics},
   publisher={Cambridge University Press, Cambridge},
        date={1990},
      volume={28},
        ISBN={0-521-38289-0},
         url={http://dx.doi.org/10.1017/CBO9780511526152},
}

\bib{MR744194}{book}{
      author={Goebel, Kazimierz},
      author={Reich, Simeon},
       title={Uniform convexity, hyperbolic geometry, and nonexpansive
  mappings},
      series={Monographs and Textbooks in Pure and Applied Mathematics},
   publisher={Marcel Dekker, Inc., New York},
        date={1984},
      volume={83},
        ISBN={0-8247-7223-7},
}

\bib{MR1788273}{article}{
      author={Kamimura, Shoji},
      author={Takahashi, Wataru},
       title={Approximating solutions of maximal monotone operators in
  {H}ilbert spaces},
        date={2000},
        ISSN={0021-9045},
     journal={J. Approx. Theory},
      volume={106},
       pages={226\ndash 240},
         url={http://dx.doi.org/10.1006/jath.2000.3493},
}

\bib{MR1758838}{article}{
      author={Kamimura, Shoji},
      author={Takahashi, Wataru},
       title={Iterative schemes for approximating solutions of accretive
  operators in {B}anach spaces},
        date={2000},
        ISSN={1343-9979},
     journal={Sci. Math.},
      volume={3},
       pages={107\ndash 115 (electronic)},
}

\bib{MR1802240}{article}{
      author={Kamimura, Shoji},
      author={Takahashi, Wataru},
       title={Weak and strong convergence of solutions to accretive operator
  inclusions and applications},
        date={2000},
        ISSN={0927-6947},
     journal={Set-Valued Anal.},
      volume={8},
       pages={361\ndash 374},
         url={http://dx.doi.org/10.1023/A:1026592623460},
}

\bib{MR2466027}{article}{
      author={Maing{\'e}, Paul-Emile},
       title={Strong convergence of projected subgradient methods for nonsmooth
  and nonstrictly convex minimization},
        date={2008},
        ISSN={0927-6947},
     journal={Set-Valued Anal.},
      volume={16},
       pages={899\ndash 912},
         url={http://dx.doi.org/10.1007/s11228-008-0102-z},
}

\bib{MR1738332}{article}{
      author={Moudafi, A.},
       title={Viscosity approximation methods for fixed-points problems},
        date={2000},
        ISSN={0022-247X},
     journal={J. Math. Anal. Appl.},
      volume={241},
       pages={46\ndash 55},
         url={http://dx.doi.org/10.1006/jmaa.1999.6615},
}

\bib{MR2314663}{article}{
      author={Nakajo, K.},
      author={Shimoji, K.},
      author={Takahashi, W.},
       title={Strong convergence to common fixed points of families of
  nonexpansive mappings in {B}anach spaces},
        date={2007},
        ISSN={1345-4773},
     journal={J. Nonlinear Convex Anal.},
      volume={8},
       pages={11\ndash 34},
}

\bib{MR2218890}{article}{
      author={Nakajo, Kazuhide},
       title={Strong convergence to zeros of accretive operators in {B}anach
  spaces},
        date={2006},
        ISSN={1345-4773},
     journal={J. Nonlinear Convex Anal.},
      volume={7},
       pages={71\ndash 81},
}

\bib{MR0328689}{article}{
      author={Reich, Simeon},
       title={Asymptotic behavior of contractions in {B}anach spaces},
        date={1973},
        ISSN={0022-247x},
     journal={J. Math. Anal. Appl.},
      volume={44},
       pages={57\ndash 70},
}

\bib{MR576291}{article}{
      author={Reich, Simeon},
       title={Strong convergence theorems for resolvents of accretive operators
  in {B}anach spaces},
        date={1980},
        ISSN={0022-247X},
     journal={J. Math. Anal. Appl.},
      volume={75},
       pages={287\ndash 292},
         url={http://dx.doi.org/10.1016/0022-247X(80)90323-6},
}

\bib{MR2680036}{article}{
      author={Saejung, Satit},
       title={Halpern's iteration in {B}anach spaces},
        date={2010},
        ISSN={0362-546X},
     journal={Nonlinear Anal.},
      volume={73},
       pages={3431\ndash 3439},
         url={http://dx.doi.org/10.1016/j.na.2010.07.031},
}

\bib{MR2273529}{article}{
      author={Suzuki, Tomonari},
       title={Moudafi's viscosity approximations with {M}eir-{K}eeler
  contractions},
        date={2007},
        ISSN={0022-247X},
     journal={J. Math. Anal. Appl.},
      volume={325},
       pages={342\ndash 352},
         url={http://dx.doi.org/10.1016/j.jmaa.2006.01.080},
}

\bib{MR1864294}{book}{
      author={Takahashi, Wataru},
       title={Nonlinear functional analysis},
   publisher={Yokohama Publishers, Yokohama},
        date={2000},
        ISBN={4-946552-04-9},
        note={Fixed point theory and its applications},
}

\bib{MR2468414}{article}{
      author={Takahashi, Wataru},
       title={Viscosity approximation methods for countable families of
  nonexpansive mappings in {B}anach spaces},
        date={2009},
        ISSN={0362-546X},
     journal={Nonlinear Anal.},
      volume={70},
       pages={719\ndash 734},
         url={http://dx.doi.org/10.1016/j.na.2008.01.005},
}

\bib{MR766150}{article}{
      author={Takahashi, Wataru},
      author={Ueda, Yoichi},
       title={On {R}eich's strong convergence theorems for resolvents of
  accretive operators},
        date={1984},
        ISSN={0022-247X},
     journal={J. Math. Anal. Appl.},
      volume={104},
       pages={546\ndash 553},
         url={http://dx.doi.org/10.1016/0022-247X(84)90019-2},
}

\bib{MR689566}{incollection}{
      author={Turett, Barry},
       title={A dual view of a theorem of {B}aillon},
        date={1982},
   booktitle={Nonlinear analysis and applications ({S}t. {J}ohns, {N}fld.,
  1981)},
      series={Lecture Notes in Pure and Appl. Math.},
      volume={80},
   publisher={Dekker, New York},
       pages={279\ndash 286},
}

\bib{MR1911872}{article}{
      author={Xu, Hong-Kun},
       title={Iterative algorithms for nonlinear operators},
        date={2002},
        ISSN={0024-6107},
     journal={J. London Math. Soc. (2)},
      volume={66},
       pages={240\ndash 256},
         url={http://dx.doi.org/10.1112/S0024610702003332},
}

\bib{MR2086546}{article}{
      author={Xu, Hong-Kun},
       title={Viscosity approximation methods for nonexpansive mappings},
        date={2004},
        ISSN={0022-247X},
     journal={J. Math. Anal. Appl.},
      volume={298},
       pages={279\ndash 291},
         url={http://dx.doi.org/10.1016/j.jmaa.2004.04.059},
}

\end{biblist}
\end{bibdiv}

\end{document}